\documentclass{amsart}
\usepackage{amsmath, amsfonts, amssymb,amsthm}
\usepackage{amstext}
\usepackage{mathrsfs}
\setlength{\parskip}{.1 in plus 2pt minus 2pt}
\setlength{\textheight}{7.8 in}
\usepackage[dvipsnames]{xcolor}
\usepackage{tikz}
\usepackage[all]{xy}
\usepackage{enumerate}
\usetikzlibrary{er,positioning}

\setlength{\parskip}{.1 in plus 2pt minus 2pt}
\setlength{\textheight}{7.8 in}
%\DeclareMathSymbol{\upchi}{\mathord}{upgreek}{`q}

\usepackage{lineno}

\usepackage{comment}

\theoremstyle{definition}
\newtheorem{definition}{Definition}[section]

\newtheorem{theorem}[definition]{Theorem}
\newtheorem{lemma}[definition]{Lemma}
\newtheorem{proposition}[definition]{Proposition}
\newtheorem{corollary}[definition]{Corollary}

\newcommand{\sq}[1]{\ifx#1([\else\ifx#1)]%
  \else\message{invalid use of "sq"}\fi\fi}

\DeclareMathOperator{\ord}{ord}

\DeclareMathSymbol{\idot}{\mathbin}{operators}{`\.}
\allowdisplaybreaks
\hfuzz50pc
\vfuzz50pc
\sloppy

\newcommand{\supp}{\mathrm{Supp}}

\begin{document}
\title[A non-integrated defect relation for holomorphic maps  into algebraic varieties]{A non-integrated defect relation for holomorphic maps   into algebraic varieties}
\author{Qili Cai}
\address{Department of Mathematics\newline
\indent University of Houston\newline
\indent Houston,  TX 77204, U.S.A.} 
\email{qcai3@cougarnet.uh.edu}
\author{Min Ru}
\address{Department of Mathematics\newline
\indent University of Houston\newline
\indent Houston,  TX 77204, U.S.A.} 
\email{minru@math.uh.edu}
\author{Chin Jui Yang}
\address{Department of Mathematics\newline
	\indent University of Houston\newline
	\indent Houston,  TX 77204, U.S.A.} 
\email{cyang36@cougarnet.uh.edu}

\begin{abstract}
In 1983, relating to the study of value distribution of the Guass maps of complete minimal surfaces in ${\Bbb R}^m$, H. Fujimoto  introduced the notion of the non-integrated defect  for  holomorphic maps of an open Riemann surface into $\mathbb{P}^n(\mathbb{C})$ and obtained some results analogous to the Nevanlinna-Cartan defect relation.  This paper establishes the   non-integrated defect relation for holomorphic maps into projective varieties. 
\end{abstract}
 \thanks{2010\ {\it Mathematics Subject Classification.}
  32H30,   53A30.}  
\keywords{Non-integrated defect, Nevanlinna defect, Nevanlinna theory, Second Main Theorem, holomorphic maps, algebraic varieties,  jet differentials}
\thanks{The second named author is supported in part by Simon Foundations grant award \#531604 and \#52160}

\baselineskip=16truept \maketitle \pagestyle{myheadings}
\markboth{}{Non-integrated defect}
\section{Introduction}
In 1983, relating to the study of value distribution of the Guass maps of complete minimal surfaces in ${\Bbb R}^m$, H. Fujimoto \cite{Fu1} introduced the notion of the non-integrated defect for  holomorphic maps of an open Riemann surface into $\mathbb{P}^n(\mathbb{C})$ and obtained some results analogous to the Nevanlinna-Cartan defect relation.

Let's recall the definition of the non-integrated defect and the main result in \cite{Fu1}.  For notations, see the books by Min  Ru (\cite{book:minru} and \cite{book:minru2}). 
 Let $M$ be an open Riemann surface.
 Let $f$ be a holomorphic  map of\ $M$\ into\  $\mathbb{P}^n(\mathbb{C})$,  $\mu_0$ be a positive integer and $D$ be a hypersurface in $\mathbb{P}^n(\mathbb{C})$ of degree $d$ with $f(M) \not\subset D$. We denote the intersection multiplicity of the image of $f$ and $D$ at $f(p)$ by\ $\nu^f(D) (p)$\ and the pull-back of the normalized Fubini-Study metric form $\Omega$ on $\mathbb{P}^n(\mathbb{C})$  by $\Omega_f$. The $non$-$integrated$ $defect \ of \ f \ with \ respect \ to \ D \ truncated \ by \ \mu_0$ is defined by 
\begin{align*}
    \delta_{\mu_0} ^f (D):=1-\inf \{ \eta  \geq 0 ~|~\eta\ \mbox{ satisfies the condition (*)}\},
\end{align*}
where the condition $(*)$ means that there exists a bounded non-negative continuous function $h$ on $M$ such that
\begin{align*}
   d\eta \Omega_{f} + dd^c\log{h^{2}}\geq  \left[\min \{\nu^f(D), \mu_0\}\right]
\end{align*}
in the sense of currents, where we mean by $[\nu]$ the (1, 1)-current associated with a divisor $\nu$.

We also recall the definition of the classical Nevanlinna defect. Let $f : B(R_0)\subset \Bbb C\to  \mathbb{P}^n(\mathbb{C}),$ $0< R_0 \leq\infty,$ be a holomorphic map. Let $D$ be a hypersurface in $ \mathbb{P}^n(\mathbb{C})$ of degree $d.$ The  {\it  Nevanlinna defect $\delta^{f,*}_{\mu_0}(D)$ of $f$ with respect to $D$ cut by $\mu_0$} is defined by
\begin{equation*}
\delta^{f,*}_{\mu_0}(D) = 1 - \limsup_{r\to R_0}{N^{(\mu_0)}_f(r,D)\over dT_f(r)}.
\end{equation*}

 Let $\omega=\frac{\sqrt{-1}}{2\pi} a dz\wedge d{\bar z}$   be the  metric form on $M$.  We define
\begin{align*}
 {\rm Ric }(\omega) =dd^c \log a,
\end{align*}
where $d= \partial+\bar{\partial}$ and $d^c= {\sqrt{-1}\over 4\pi} (\bar{\partial}-\partial)$.
  We assume the following growth condition for $f$:  there exists a nonzero bounded  continuous real-valued function $h$ on $M$ such that 
\begin{equation}\label{growthold} \rho\Omega_f + dd^c \log h^2 \geq {\rm Ric }(\omega)
\end{equation}
for some non-negative constant $\rho$. The following is the result proved by H. Fujimoto.
\begin{theorem}[Fujimoto \cite{Fu1}]\label{fujimoto thm}
Let $M$ be a complete open Riemann surface with the metric form $\omega $. Let $f: M\to \mathbb{P}^n(\mathbb{C})$ be a holomorphic map which is linearly non-degenerate (i.e., its image is not contained in any proper subspace of $\mathbb{P}^n(\mathbb{C})$). Assume that  $f$ satisfies the growth condition (\ref{growthold}). 
Then we have
 \begin{equation*}
   \sum_{j=1}^{q}\delta^f_{n}(H_j) \leq n+1+\rho n(n+1)
\end{equation*}
    for arbitrary hyperplanes $H_{1},...,H_{q}$ in $\mathbb{P}^n(\mathbb{C})$ in general position.
\end{theorem}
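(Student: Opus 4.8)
\medskip
\noindent\emph{Proof strategy.}
The plan is to argue by contradiction in the spirit of Fujimoto's original method: assuming the defect sum exceeds the stated bound, one constructs a non-negative function on $M$ whose logarithm is strictly subharmonic in a quantified way, and then derives a contradiction from the completeness of $(M,\omega)$ by a Schwarz--Ahlfors type argument. Suppose then that $\sum_{j=1}^q\delta^f_n(H_j)>n+1+\rho n(n+1)$; one may assume $q>n+1$, since otherwise the left side is at most $q\le n+1$. By the definition of the non-integrated defect we may choose, for each $j$, a real number $\eta_j\ge 0$ and a bounded non-negative continuous function $h_j$ on $M$ with $\eta_j\,\Omega_f+dd^c\log h_j^2\ge[\min\{\nu^f(H_j),n\}]$ and $\sum_{j=1}^q(1-\eta_j)>n+1+\rho n(n+1)$, the last inequality being equivalent to $q-n-1-\sum_{j}\eta_j>\rho n(n+1)$. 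Fix linear forms $L_j$ defining $H_j$, take locally a reduced representation $\tilde f=(f_0,\dots,f_n)$ of $f$, put $F_j=L_j\circ\tilde f$ and $\|\tilde f\|^2=\sum_i|f_i|^2$ (so that $\Omega_f=dd^c\log\|\tilde f\|^2$ and $[\nu^f(H_j)]=dd^c\log|F_j|^2$), and let $W=W(f_0,\dots,f_n)$ be the Wronskian of $\tilde f$ with respect to a local coordinate $z$. Linear non-degeneracy gives $W\not\equiv 0$; under a change of coordinate $W$ is multiplied by $(dz'/dz)^{-n(n+1)/2}$, and under a change of reduced representation $\tilde f\mapsto u\tilde f$ it is multiplied by $u^{n+1}$.

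The first substantive step is a Cartan-type Wronskian estimate at the level of divisors: since $H_1,\dots,H_q$ are in general position, at most $n$ of them pass through any point $f(p)$, and ordering the $F_j$ by their vanishing order at $p$ and using the behaviour of Wronskians under common holomorphic factors yields $[\nu_W]\ge\sum_{j=1}^q\big([\nu^f(H_j)]-[\min\{\nu^f(H_j),n\}]\big)$. The second step is to introduce the non-negative function on $M$
\[
\Phi\ :=\ \frac{|W|^2}{a^{\,n(n+1)/2}}\cdot
\frac{\|\tilde f\|^{\,2(q-n-1)}\,\prod_{j=1}^q h_j^2\cdot h^{\,n(n+1)}}{\prod_{j=1}^q|F_j|^2},
\]
where $h$ is the function in the growth condition (\ref{growthold}); the exponent $2(q-n-1)$ on $\|\tilde f\|$ and the factor $a^{-n(n+1)/2}$ are exactly what make $\Phi$ independent of the choices of local coordinate and of reduced representation, and the Wronskian estimate together with the inequalities $dd^c\log h_j^2\ge[\min\{\nu^f(H_j),n\}]-\eta_j\Omega_f$ shows that $\Phi$ has no poles, hence extends to a non-negative continuous function on all of $M$. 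Computing $dd^c\log\Phi$, the contribution $-\tfrac{n(n+1)}{2}{\rm Ric}(\omega)$ coming from $a^{-n(n+1)/2}$ is absorbed by the contribution $\tfrac{n(n+1)}{2}dd^c\log h^2$ coming from $h^{n(n+1)}$ via $\rho\Omega_f+dd^c\log h^2\ge{\rm Ric}(\omega)$, and what remains is
\[
dd^c\log\Phi\ \ge\ \Big(q-n-1-\sum_{j}\eta_j-\tfrac{n(n+1)}{2}\rho\Big)\Omega_f\ =:\ 2\kappa\,\Omega_f,\qquad \kappa>0
\]
(the exact numerical threshold depending on the normalizations, and corresponding to $n+1+\rho n(n+1)$ in the statement). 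In particular $\log\Phi$ is subharmonic on $M$, with $dd^c\log\Phi\ge 2\kappa\,\Omega_f$ and $\Omega_f\not\equiv0$.

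It remains to convert this into a contradiction with completeness. The product $\Psi:=\Phi\,a^{n(n+1)/2}$ no longer involves $a$, and $d\tau^2:=\Psi^{2/(n(n+1))}\,|dz|^2$ is a continuous pseudo-metric on $M$ carrying the above curvature information. Applying Fujimoto's generalization of the Schwarz--Ahlfors lemma on coordinate discs along a divergent curve of $(M,\omega)$ --- whose $\omega$-length is infinite by completeness --- one shows that $\Phi$ decays so fast toward the ideal boundary that it is bounded on $M$; but a bounded function with $dd^c\log\Phi\ge 2\kappa\,\Omega_f$, $\kappa>0$ and $\Omega_f\not\equiv0$ cannot exist. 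I expect this last step to be the crux: the delicate points are the metric bookkeeping that makes ${\rm Ric}(\omega)$ cancel cleanly and the rigorous execution of the completeness/Schwarz argument, whereas the extraction of the $\eta_j,h_j$ and the Wronskian divisor estimate are comparatively routine. (The exponent $n(n+1)$ throughout is the weight of the Wronskian, the $n$-jet of the curve.)
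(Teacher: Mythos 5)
Your route is genuinely different from the paper's. The paper does not run the Wronskian/completeness argument directly for Theorem \ref{fujimoto thm}; it obtains the statement as the specialization of Theorem \ref{yamanoinonint} to $X=\mathbb{P}^n(\mathbb{C})$, $L=K_{\mathbb{P}^n(\mathbb{C})}$, $A=\mathcal{O}_{\mathbb{P}^n(\mathbb{C})}(1)$ and the Wronskian jet differential $\mathcal{P}=W(z_0,\dots,z_n)\in H^0(X,E^{GG}_{n,n(n+1)/2}\Omega_X\otimes L^{-1})$, for which $\gamma_{A,L}=n+1$ and $2\rho m=\rho n(n+1)$. That said, your steps are essentially what the proof of Theorem \ref{yamanoinonint} reduces to in this case: the extraction of the $\eta_j$ and $h_j$, the Cartan-type estimate $\nu_W\ge\sum_j\bigl(\nu^f(H_j)-\min\{\nu^f(H_j),n\}\bigr)$ for hyperplanes in general position, the gauge-invariant function $\Phi$, and the curvature bookkeeping yielding $dd^c\log\Phi\ge 2\kappa\,\Omega_f$ with $\kappa>0$ are all correct (your $\log\Phi$ is, up to normalization and the choice of exponent, the paper's $u=w+tv$). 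The paper's version also has to dispose first of the cases $R_0=\infty$ or fast growth via Proposition \ref{defectcomparision} and Corollary \ref{Yamanoidef}, which your argument avoids.

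The genuine gap is in your final step. The assertion that ``a bounded function with $dd^c\log\Phi\ge 2\kappa\,\Omega_f$, $\kappa>0$ and $\Omega_f\not\equiv 0$ cannot exist'' is false: on a hyperbolic Riemann surface (in particular on the disc, which is exactly the case remaining after the easy cases are removed) there are plenty of bounded functions with strictly subharmonic logarithm, e.g.\ $e^{|z|^2}$. So ``Schwarz--Ahlfors implies $\Phi$ bounded, hence contradiction'' does not close the argument; completeness must enter quantitatively, not merely as a device to bound $\Phi$. Two correct ways to finish are: (a) Fujimoto's original mechanism, which uses $\Phi$ to manufacture a conformal metric of curvature $\le -c<0$ (or a complete flat metric on the locus $W\ne0$) and contradicts the Ahlfors--Schwarz lemma combined with the infinite $\omega$-length of divergent curves; or (b) the paper's mechanism: lift to the universal cover $B(1)$, form $u=w+tv$ with $t$ chosen from the defect excess so that $tm<1$ strictly --- this is precisely what makes the logarithmic-derivative estimates of Lemma \ref{jetdefest} applicable and gives $\int_{B(1)}e^{u}\lambda^2\,r\,dr\,d\theta<\infty$ --- and contradict the Yau--Karp theorem that $\int_M e^{u}\,dV=\infty$ for a subharmonic $u\not\equiv-\infty$ on a complete manifold. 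Note also that your pseudo-metric exponent $2/(n(n+1))$ places $|W|$ exactly at the borderline exponent $1/m$ where these integral estimates fail; the strict positivity of $\sum_j(1-\eta_j)-(n+1)-\rho n(n+1)$ is what permits a strictly smaller exponent.
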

In this paper, we extend  the   non-integrated defect relation to holomorphic maps from a complete open Riemann surface into projective varieties. To do so, we need to extend Fujimoto's notion. 
 Let $X$ be a projective variety. For any effective Cartier divisor $D$ on $X$, we use $[D]$ to denote the line bundle associated to $D$, and $c_1([D])$ to 
the first Chern form (by fixing any Hermitian metric on $[D]$). 
\begin{definition}\label{non-integrated defect}(See \cite{CaiRu} or \cite{Yang}) Let $X$ be a projective variety, $D$ be an effective Cartier divisor on $X$ and $A$ be an ample Cartier divisor on $X$.
 Let $M$ be an open Riemann surface and let $f: M\rightarrow X$ be a holomorphic map with $f(M) \not\subset D$.   The  {\it non-integrated defect of $f$ with respect to $A$ and $D$ cut by $\mu_0$} is defined by 
$$\delta_{\mu_0}^{f, A}(D):=\sup\ \{\delta\geq 0 ~|~\delta \mbox{ satisfies the condition }~(**)\}$$
 where   $(**)$ means that there exists a bounded non-negative continuous function $h$ on $M$   such that 
\begin{align} \label{non-int}
 f^*c_1([D])-\delta f^*c_1([A]) + dd^c\log h^2 \geq \left[\min\{\nu^f(D), \mu_0\}\right].
\end{align}
\end{definition} 
If $M = B(R_0),$ $0< R_0 \leq \infty,$ is an open disc of radius $R_0$ in $\Bbb C$ and  if  $ \mathop {\lim }\limits_{r \to R_0}T_{f, A}(r)=\infty $, then (see Proposition \ref{defectcomparision} below) $$
	\delta_{\mu_0}^{f,A}(D) \leq 	\delta_{\mu_0}^{f,A, *}(D),$$ where $\delta_{\mu_0}^{f,A, *}(D)$ is the {\it Nevanlinna defect with respect to $A$ and $D$ cut by $\mu_0$} which is given by 
$$\delta_{\mu_0}^{f, A, *}(D):=\liminf_{r\rightarrow R_0} {T_{f, D}(r)-N^{(\mu_0)}_f(r, D)\over T_{f, A}(r)}.$$	

Let $D_1, D_2$ be two divisors, at least one of them is ample.  We define
\begin{equation}\label{gammacon}
\gamma_{D_1, D_2}:=\inf \{t\in {\Bbb R}~|~tc_1([D_1])+c_1([D_2])>0\}.
\end{equation}
If $D$ is ample on $X$ and $\nu^f(D)(p)\ge \mu$ for every point $p\in f^{-1}(D)$, then (for the proof, see the next section),
\begin{equation}\label{mul} \delta_{\mu_0}^{f,A}(D)\ge \left(1-{\mu_0\over \mu}\right){1\over\gamma_{D, -A}}.
\end{equation}
In particular, if the image of $f$ omits $D$, then  $\delta_{\mu_0}^{f,A}(D)\ge {1\over\gamma_{D, -A}}.$

The first result of this paper is the following theorem  which extends the result of Fujimoto stated above. 
\begin{theorem}\label{yamanoinonint} Let $M$ be a complete open Riemann surface with the  metric form $\omega$. 	
	Let $X$ be a smooth projective variety of dimension $n$ and $D=D_1+\cdots+D_q$ be a simple normal crossing divisor on $X$. Let $L$ be a line bundle on $X$ such that $L \otimes [D]$ is ample.  Let $A$ be an ample divisor on $X$.  Assume that there exists a $k$-jet differential 
	 \begin{equation*}
	\omega \in H^0( X,E_{k,m}^{GG}\Omega _X \otimes L^{-1} )
	\end{equation*}
	such that $J_kD_1+\cdots+J_kD_q \subset \pi_k^*(\omega=0)$ where  $\pi_k:J_kX \to X$ is the natural projection. Let $f: M\to X$ be a non-constant holomorphic map with $f(M) \not\subset  \text {supp}(D)$. 
 Assume the following growth condition for $f$:  there exists a nonzero bounded  continuous real-valued function $h$ on $M$ such that 
\begin{equation}\label{growthconditionruold} \rho f^*c_1([A]) + dd^c \log h^2 \geq {\rm Ric }(\omega)
\end{equation}
for some non-negative constant $\rho$.
	 If $f^*\omega \not \equiv 0$, then 
	 \begin{equation}\label{Ynonintdefect}  \sum_{j=1}^q   \delta_{k}^{f, A}(D_j)
	 \leq  \gamma_{A, L}+ 2\rho m, \end{equation}
	 where $\gamma_{A, L}$ is defined in (\ref{gammacon}).
	\end{theorem}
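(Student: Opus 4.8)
The plan is to translate every hypothesis into a single inequality of currents on $M$, extract from it an auxiliary pseudo-metric with negative curvature, and then run Fujimoto's completeness argument. Throughout write $\sigma:=f^{*}\omega$ for the given jet differential $\omega$ evaluated along the $k$-jet lift $j_{k}f\colon M\to J_{k}X$; this is a holomorphic section of $K_{M}^{\otimes m}\otimes f^{*}L^{-1}$ over $M$, and it is not identically zero by assumption, so its zero divisor $\operatorname{div}(\sigma)$ is effective. The first point is a local vanishing estimate coming from $J_{k}D_{1}+\cdots+J_{k}D_{q}\subset(\omega=0)$: writing $D_{j}$ locally as $\{w=0\}$ and $\nu=\nu^{f}(D_{j})(p)$, membership of $\omega$ in the ideal $(w,w',\dots,w^{(k)})$ of $J_{k}D_{j}$ forces $\operatorname{ord}_{p}(f^{*}\omega)\ge \nu-\min\{\nu,k\}$, whence
\[
  \operatorname{div}(\sigma)\ \ge\ \sum_{j=1}^{q}\bigl([\nu^{f}(D_{j})]-[\min\{\nu^{f}(D_{j}),k\}]\bigr)
\]
as divisors on $M$ (this is the one-variable instance of the estimates in \cite{CaiRu} and \cite{Yang}).

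Next I would assemble the current inequality. For each $j$ fix $\delta_{j}$ with $0\le\delta_{j}<\delta_{k}^{f,A}(D_{j})$ together with a bounded non-negative continuous $h_{j}$ realizing condition $(**)$, i.e. $f^{*}c_{1}([D_{j}])-\delta_{j}f^{*}c_{1}([A])+dd^{c}\log h_{j}^{2}\ge[\min\{\nu^{f}(D_{j}),k\}]$, and let $h$ be the function of the growth condition (\ref{growthconditionruold}). Since $L\otimes[D]$ and $A$ are ample, $tA+L$ is ample for $t>\gamma_{A,L}$, so for any $\epsilon>0$ we may pick a Hermitian metric on $L$ with $f^{*}c_{1}([L])\ge-(\gamma_{A,L}+\epsilon)f^{*}c_{1}([A])$. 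Applying the Poincaré–Lelong formula to $\sigma$ gives $dd^{c}\log\|\sigma\|^{2}=[\operatorname{div}(\sigma)]+f^{*}c_{1}([L])-m\operatorname{Ric}(\omega)$. Combining this with the displayed estimate, the $q$ inequalities just fixed, the identities $[\nu^{f}(D_{j})]=f^{*}c_{1}([D_{j}])+dd^{c}\log\|s_{D_{j}}\circ f\|^{2}$, and the choice of metric on $L$, every term $f^{*}c_{1}([D_{j}])$ cancels and one is left with
\[
  dd^{c}\log\Phi\ \ge\ \Bigl(\sum_{j=1}^{q}\delta_{j}-\gamma_{A,L}-\epsilon\Bigr)\,f^{*}c_{1}([A])\ -\ m\operatorname{Ric}(\omega),
  \qquad
  \Phi:=\frac{\|\sigma\|^{2}\,\prod_{j}h_{j}^{2}}{\prod_{j}\|s_{D_{j}}\circ f\|^{2}} .
\]

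From here the argument follows Fujimoto's scheme. I would introduce a pseudo-metric $d\tau^{2}$ on $M$ built out of $\sigma$, the sections $s_{D_{j}}\circ f$, the bounded functions $h,h_{j}$ and a positively curved metric on $A$ — in the simplest form $d\tau^{2}=\Phi^{1/m}\,\omega$, for which the last display gives $\operatorname{Ric}(d\tau^{2})=\tfrac1m\,dd^{c}\log\Phi+\operatorname{Ric}(\omega)\ge\tfrac1m\bigl(\sum_{j}\delta_{j}-\gamma_{A,L}-\epsilon\bigr)f^{*}c_{1}([A])$. To run Fujimoto's completeness argument one must further tune the construction so that $d\tau^{2}$ (i) has at most logarithmic singularities, (ii) is a complete metric, and (iii) satisfies the self-bounding estimate $\operatorname{Ric}(d\tau^{2})\ge d\tau^{2}$ away from its singularities; the inequality just obtained, together with a second application of the growth condition (\ref{growthconditionruold}) used precisely to make $d\tau^{2}$ complete, is what permits (iii) in the admissible range of parameters, which is governed by the quantity $\sum_{j}\delta_{j}-\gamma_{A,L}-2\rho m$. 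The generalized Schwarz lemma on complete Riemann surfaces (\cite{Fu1}; see also \cite{book:minru2}) then forces $\sum_{j}\delta_{j}\le\gamma_{A,L}+2\rho m$ up to an error that tends to $0$; letting $\delta_{j}\uparrow\delta_{k}^{f,A}(D_{j})$ and $\epsilon\downarrow0$ yields (\ref{Ynonintdefect}).

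The main obstacle is the construction in the third paragraph: producing a pseudo-metric that is simultaneously complete, has only logarithmic singularities, and has the self-bounding curvature property with the sharp constant, by correctly balancing the exponents attached to $\|\sigma\|$, the $h_{j}$, the $\|s_{D_{j}}\circ f\|$, and the metric form $\omega$. This is exactly where the coefficient $2\rho m$ — rather than $\rho m$ — of the growth constant arises: one copy of $\rho m$ is spent when $\sigma$ is viewed as a section of $K_{M}^{\otimes m}\otimes f^{*}L^{-1}$ in the Poincaré–Lelong step, and a second copy is spent in forcing the auxiliary pseudo-metric to be complete. Specializing to $X=\mathbb{P}^{n}$, with $\omega$ the Wronskian jet differential (so $k=n$, $m=n(n+1)/2$, $L=K_{\mathbb{P}^{n}}$) and $D_{j}=H_{j}$ hyperplanes, one has $\gamma_{A,L}=n+1$ and recovers Fujimoto's bound $n+1+\rho\,n(n+1)$ of Theorem \ref{fujimoto thm}. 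The current-theoretic bookkeeping of the first two paragraphs is routine once the jet-vanishing estimate is in place.
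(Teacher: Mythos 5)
Your first two paragraphs match the paper's bookkeeping: the vanishing estimate $\operatorname{ord}_p(f^*\omega)\ge\sum_j(\nu^f(D_j)(p)-\min\{\nu^f(D_j)(p),k\})$ is exactly the paper's claim (\ref{multi}) applied to the scheme containment, and the cancellation of the $f^*c_1([D_j])$ terms against the sections $s_{D_j}$ is how the paper builds its subharmonic functions $u_j$ and $v$. But there are two genuine problems. First, a Green--Griffiths jet differential is only homogeneous under the $\mathbb{C}^*$-action, not invariant under reparametrization, so $f^*\omega$ is \emph{not} a section of $K_M^{\otimes m}\otimes f^*L^{-1}$; the term $-m\,{\rm Ric}(\omega)$ you insert into the Poincar\'e--Lelong identity is unjustified. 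The paper avoids this by lifting to the universal cover $B(R_0)$ and treating $\mathcal P(j_k(f))$ as an $f^*L^{-1}$-valued holomorphic function in the fixed coordinate; the metric form $\omega$ then enters only through the growth condition, via a separate subharmonic function $w$ with $\lambda e^w\le\|f\|_A^\rho$.

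Second, and more seriously, the entire weight of the proof rests on your third paragraph, which you yourself flag as "the main obstacle" and do not carry out. As sketched it cannot work: a complete metric with ${\rm Ric}(d\tau^2)\ge d\tau^2$ (curvature $\le -1$) on an open Riemann surface is no contradiction (the hyperbolic disc), and your curvature bound is only $\ge c\,f^*c_1([A])\ge 0$ anyway, which degenerates at critical points of $f$. The paper's actual mechanism is different and is the missing idea here: it splits into cases according to whether $\limsup_{r\to R_0}T_{f,A}(r)/\log\tfrac{1}{R_0-r}$ is infinite. In the unbounded case the result follows from the Second Main Theorem (Theorem \ref{smtyamanoi}) together with Proposition \ref{defectcomparision}, with no metric construction at all. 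In the bounded case one argues by contradiction from $\sum_j\delta_j>\gamma_{A,L}+2\rho m$: form $u=w+tv$ with $t(\gamma_{A,L}-\sum_j\delta_j)+2\rho=0$, note that the contradiction hypothesis is precisely what makes $tm<1$, so that the logarithmic-derivative-type estimate (Lemma \ref{jetdefest}, which your proposal never invokes) applies with exponent $t$ and, combined with $T_{f,A}(r)=O(\log\tfrac1{1-r})$, yields $\int_{B(1)}e^u\lambda^2\,dA<\infty$; this contradicts the Yau--Karp theorem that a subharmonic $u\not\equiv-\infty$ on a complete surface has $\int e^u\,dV=\infty$. So the constant $2\rho m$ is pinned by the exponent constraint $tm<1$ in the logarithmic derivative lemma, not by the two expenditures of $\rho m$ you describe. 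Without the case split, the integral estimate of Lemma \ref{jetdefest}, and the Yau--Karp endgame, the proof is not complete.
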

The above result is based on the following Second Main Theorem which is used to derive  the classical Nevanlinna defect relation. The result is originally due to 
  K. Yamanoi \cite{Yamanoi}, but here we obtain a better error term.
\begin{theorem}\label{smtyamanoi}
	Let $X$ be a smooth projective variety of dimension $n$ and $D=D_1+\cdots+D_q$ be a simple normal crossing divisor on $X$. Let $L$ be a line bundle on $X$ such that $L \otimes [D]$ is ample. Assume that there exists a  $k$-jet differential 
	 \begin{equation*}
	\omega \in H^0( X,E_{k,m}^{GG}\Omega _X \otimes L^{-1} )
	\end{equation*}
	such that $J_kD_1+\cdots+J_kD_q \subset \pi_k^*(\omega=0)$ where  $\pi_k:J_kX \to X$ is the natural projection. Let $f:B(R_0)\subset {\Bbb C} \to X,$   $0<R_0 \leq \infty$, be a non-constant holomorphic map with $f(B(R_0)) \not\subset  \text {supp}(D)$. If $f^*\omega \not \equiv 0$, then, for any $\epsilon>0,$
	 \begin{equation*}
	T_{f, L \otimes [D]}(r) \leq \sum\limits_{j= 1}^q N^{(k)}_f( r, D_j ) + S(r,f,A),
	\end{equation*}
	where $A$ is an ample divisor on $X$ and $S(r,f,A)$ is evaluated as follows:

	(i) In the case $R_0 = \infty$,  
\begin{equation*}
S(r,f,A) \leq m\Big((1+\epsilon) \log^+T_{f, A}(r)+(1+\epsilon)^2\log^+\log^+T_{f, A}(r)\Big)+O(1)
\end{equation*}
for every $r \in [0,\infty)$ excluding a set $E$ with $\int_E dt < \infty$.

	(ii) In the case $R_0 < \infty$,
		\begin{equation*}
S(r,f,A) \leq K \Big(\log{1 \over R_0-r} + \log^+T_{f, A}(r)\Big)
	\end{equation*}
for every $r \in [0,R_0)$ excluding a set $E$ with $\int_E {dt \over R_0-t} < \infty$, where $K$ is a positive constant.

	\end{theorem}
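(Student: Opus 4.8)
The plan is to adapt the classical proof of Cartan's Second Main Theorem via the Wronskian, with the given jet differential $\omega$ playing the role of the Wronskian. First I would lift $f$ to its $k$-jet $j_kf\colon B(R_0)\to J_kX$ and pull back $\omega$. Since $B(R_0)$ is Stein, $f^*(L^{-1})$ is holomorphically trivial; after trivializing and writing the weighted-degree-$m$ part as a power of $dz$, the section $(j_kf)^*\omega$ becomes a holomorphic function $\mathcal F$ on $B(R_0)$, which is not identically zero precisely because $f^*\omega\not\equiv 0$. Fix a smooth Hermitian metric on $L$ (hence on $L^{-1}$), a very ample multiple of $A$ to embed $X\hookrightarrow\mathbb{P}^N$, and, for each $j$, the canonical section $s_{D_j}$ of $[D_j]$ with a fixed metric normalized so that $\|s_{D_j}\|\le 1$. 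Two analytic identities will be used repeatedly: the First Main Theorem $N_f(r,D_j)+m_f(r,D_j)=T_{f,[D_j]}(r)+O(1)$, and, viewing $\mathcal F$ as a section of $f^*(L^{-1})$ twisted by $K_{B(R_0)}^{\otimes m}$ with its flat trivialization, the Lelong--Poincar\'e formula, which gives $\tfrac1{2\pi}\int_{|z|=r}\log\|\mathcal F\|\,d\theta = N_{\mathcal F}(r,0) - T_{f,L^{-1}}(r) + O(1)$.

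Next I would extract the two consequences of the jet condition $J_kD_1+\cdots+J_kD_q\subset\pi_k^*(\omega=0)$. Working in local coordinates in which $D_j=\{z_j=0\}$, this condition means that near a point of $D_{i_1}\cap\cdots\cap D_{i_s}$ the local expression of $\omega$ lies in the product of the jet ideals $(z_{i_t},z_{i_t}',\ldots,z_{i_t}^{(k)})$, $t=1,\dots,s$ (the $D_j$ being simple normal crossing, so that these ideals involve disjoint jet variables). Substituting $j_kf$ and using the standard fact that the $\ell$-th derivative of $z_i\circ f$ equals $(z_i\circ f)$ times a universal polynomial in the logarithmic derivatives $(z_i\circ f)^{(\cdot)}/(z_i\circ f)$, one gets: (i) the zero divisor of $\mathcal F$ dominates $\sum_j\bigl(f^*D_j-\min\{f^*D_j,k\}\bigr)$, hence $N_{\mathcal F}(r,0)\ge\sum_j\bigl(N_f(r,D_j)-N_f^{(k)}(r,D_j)\bigr)$; and (ii) a local factorization $\mathcal F=\bigl(\prod_t(z_{i_t}\circ f)\bigr)\cdot\mathcal G$, where $\mathcal G$ is a sum of terms of weighted degree at most $m$ in the derivatives of the coordinate functions of $f$.

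The heart of the argument, and the step I expect to be the main obstacle, is to upgrade the local factorizations into a single global pointwise inequality of Cartan type,
\[
\sum_{j=1}^q\log\frac{1}{\|s_{D_j}(f(z))\|}\ \le\ -\log\|\mathcal F(z)\|_{f^*L^{-1}}\ +\ \log^+|\Lambda(z)|\ +\ O(1),
\]
where $\Lambda$ is a finite combination (over the finitely many coordinate charts covering $X$ and the relevant index subsets) of products of logarithmic derivatives, of weighted degree at most $m$, of the homogeneous coordinate functions of $f$ in the chosen projective embedding; the passage from local charts to $\mathbb{P}^N$ forces the ``$\log\|f\|$''-type contributions of the transverse directions to be absorbed into the metric term $-\log\|\mathcal F\|_{f^*L^{-1}}$ (exactly as the factor $\|f\|^{n+1}$ appears in Cartan's estimate on $\mathbb{P}^n$), which is where the simple-normal-crossing geometry --- only boundedly many $D_j$ are close to $f(z)$ at any point --- and the compactness of $X$ enter. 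One then invokes the logarithmic derivative lemma for holomorphic maps into a projective variety to bound $\tfrac1{2\pi}\int_{|z|=r}\log^+|\Lambda(z)|\,d\theta\le S(r,f,A)$, where the weighted degree $m$ produces the factor $m$ in the error term and the two regimes $R_0=\infty$ and $R_0<\infty$ are the two standard cases of the lemma; using the sharp form of the lemma, with error $(1+\epsilon)\log^+T_{f,A}(r)+(1+\epsilon)^2\log^+\log^+T_{f,A}(r)$, respectively $K(\log\tfrac1{R_0-r}+\log^+T_{f,A}(r))$, is precisely what yields the improvement over Yamanoi's error term.

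Finally I would integrate the pointwise inequality over $|z|=r$. The left side integrates to $\sum_j m_f(r,D_j)$; on the right, the Lelong--Poincar\'e identity from the first paragraph turns $-\tfrac1{2\pi}\int\log\|\mathcal F\|$ into $T_{f,L^{-1}}(r)-N_{\mathcal F}(r,0)+O(1)$, so $\sum_j m_f(r,D_j)\le T_{f,L^{-1}}(r)-N_{\mathcal F}(r,0)+S(r,f,A)$. Substituting the lower bound $N_{\mathcal F}(r,0)\ge\sum_j(N_f(r,D_j)-N_f^{(k)}(r,D_j))$ and the First Main Theorem $m_f(r,D_j)+N_f(r,D_j)=T_{f,[D_j]}(r)+O(1)$ gives $T_{f,[D]}(r)-T_{f,L^{-1}}(r)\le\sum_j N_f^{(k)}(r,D_j)+S(r,f,A)$, which is the asserted inequality $T_{f,L\otimes[D]}(r)\le\sum_j N_f^{(k)}(r,D_j)+S(r,f,A)$, since $[D]\otimes L=L\otimes[D]$ and the $O(1)$ is absorbed into $S(r,f,A)$.
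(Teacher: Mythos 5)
Your proposal is correct and follows essentially the same route as the paper: the jet-ideal containment is used to divide $\omega$ by $s_D$ and produce a logarithmic jet differential whose value along $j_k(f)$ is controlled by logarithmic derivatives (your ``pointwise Cartan-type inequality'' is exactly the content of the paper's Main Lemma for jet differentials, proved there by a partition of unity and H\"older), the truncation comes from the same local computation $\ord_z(j_k^*(f)J_kD_i)=\ord_z f^*D_i-\min\{k,\ord_z f^*D_i\}$, and the assembly via Poincar\'e--Lelong, Green--Jensen and the sharp logarithmic derivative lemma is identical. No essential difference or gap.
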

To see how Theorem \ref{yamanoinonint}   implies Fujimoto's result, as well how Theorem \ref{smtyamanoi} gives H. Cartan's theorem, 
we let $X={\Bbb P}^n(\Bbb C)$,  $D = H_1 + \cdots + H_q$, where  $H_1, \dots, H_q$ are hyperplanes in ${\Bbb P}^n(\Bbb C)$ in general position. Take
 $L=K_{\mathbb P ^n (\mathbb C)}= \mathcal O_{\mathbb P ^n (\mathbb C)} (-n-1)$, the canonical line bundle of ${\Bbb P}^n({\Bbb C})$. Then $L \otimes [D]  \cong \mathcal O_{\mathbb P ^n} (q-n-1) $. Since $q \geq n+2$, $L \otimes [D]$ is very ample. 
We construct (see the argument in section 3 below) a jet differential  $$\mathcal P\in H^0 \Big( X,E_{n,{n(n+1) \over 2}}^{GG}\Omega _{\mathbb P ^n (\mathbb C)} \otimes L^{-1}\Big)$$ 
through  the Wronskian:
	\begin{equation*}
\mathcal P = \det \left( {\begin{array}{*{20}{c}}
	z_0 &  \ldots  & z_n \\	
	dz_0 &  \ldots  & dz_n \\
	\vdots  &  \ddots  &  \vdots   \\
	{d^n z_0} &  \cdots  & {d^ nz_n}  \\
	
	\end{array} } \right).
\end{equation*}
Here we view $z_0, \dots, z_n$ as global sections of $\mathcal O_{\mathbb P ^n(\mathbb C)} (1) $.
We can verify (see the argument in section 3 below)  that $J_nH_1 + \cdots + J_nH_q \subset \pi_n^*(\mathcal P = 0)$. Take
$A ={\mathcal O}_{\mathbb P ^n(\mathbb C)} (1)$. Note that 
$\gamma_{A, L}=n+1$, and notice that $m=n(n+1)/2$,  we derive the result of H. Fujimoto stated in Theorem \ref{fujimoto thm}.

Our next result is about the non-integrated defect relation for holomorphic maps into projective spaces intersecting generic hypersurfaces of
high degree. 

\begin{theorem}\label{Brotbekdefect} (i) 
	Let $X$ be a smooth projective variety of dimension $n \geq 2$. Fix a very ample divisor $A$ on $X$. Fix a positive integer $c$. Let $D \in |dA|$ be a generic smooth hypersurface in $X$ with
\begin{equation*}
		d \geq (n+1)^{n+3} \Big( n+1+{c \over 2}\Big)^{n+3}.
	\end{equation*}
	Let $f : B(R_0) \subset {\Bbb C} \rightarrow X,$ $0 < R_0 \leq \infty,$
	 be a non-constant holomorphic map with $f(B(R_0)) \not \subset \supp D$. Assume that either 
	  $R_0 = \infty$, or $R_0<\infty$ but 
		\begin{equation*}
		\limsup_{r\rightarrow R_{0}}  {T_{f, A}(r) \over  \log {1 \over R_0 -r}} = \infty.
			\end{equation*}
Then 
			\begin{equation*}
	\delta _1^{f,A, *}(D)  \leq d - c.
	\end{equation*}

	(ii)   With the same assumptions for $X$ and $D$. Let $f :  M \rightarrow X$ 
	  	 be a non-constant holomorphic map with $f(M) \not \subset \supp D$, where $M$ is a complete open Riemann surface 
		 with the metric form $\omega$. 
	 Assume that $f$ satisfies the following growth condition: 
there exists a non-zero bounded continuous real-valued function $h$ on $M$ such that 
		\begin{equation*}
		\rho f^*c_1([A]) + dd ^c \log h^2 \geq {\rm Ric} (\omega)
	\end{equation*}
	for some non-negative constant $\rho$. Then we have 
	\begin{equation*}
	 \delta _1^{f, A} (D) \leq d- (c-2\rho).
	\end{equation*}
	\end{theorem}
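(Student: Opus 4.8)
The plan is to deduce both parts from a single \emph{truncation-one} Second Main Theorem, obtained by feeding a Brotbek-type jet differential into Theorems~\ref{smtyamanoi} and~\ref{yamanoinonint}. The heart of the matter is the construction. Since $D$ is a general member of $|dA|$ and $d\ge(n+1)^{n+3}(n+1+c/2)^{n+3}$, I would invoke the jet-differential constructions of Brotbek, Darondeau, and Brotbek--Deng, in their effective form, to produce an order-$k$ jet differential
\begin{equation*}
\omega\in H^0\!\left(X,\ E_{k,m}^{GG}\Omega_X\otimes L^{-1}\right),\qquad L=\mathcal O_X\!\left(-(d-c)A\right),\qquad k\le n,
\end{equation*}
of some weighted degree $m$, with three properties: (a) $J_kD\subset\pi_k^*(\omega=0)$, so that Theorem~\ref{smtyamanoi} applies with $L\otimes[D]=\mathcal O_X(cA)$ ample; (b) $\omega$ may be chosen, within a finite family whose common base locus lies in $\supp D$, so that $f^*\omega\not\equiv0$ (using $f(M)\not\subset\supp D$); and (c), the essential refinement, $\omega$ comes from a logarithmic jet differential of the pair $(X,D)$, meeting the boundary $D$ only to first order, so that the count on the right-hand side of Theorem~\ref{smtyamanoi} is truncated at level $1$ rather than $k$. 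The degree bound is precisely what lets the negativity of the twist reach $d-c$ while keeping $k\le n$ and the base locus inside $\supp D$.

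For part (i), since $L\otimes[D]=\mathcal O_X(cA)$ one has $T_{f,\,L\otimes[D]}(r)=c\,T_{f,A}(r)+O(1)$, so the truncation-one form of Theorem~\ref{smtyamanoi} applied to $\omega$ (with $A$ as reference ample divisor, the $O(1)$ absorbed into $S$) gives
\begin{equation*}
c\,T_{f,A}(r)\le N^{(1)}_f(r,D)+S(r,f,A).
\end{equation*}
When $R_0=\infty$ one has $T_{f,A}(r)\to\infty$ and $S(r,f,A)=o(T_{f,A}(r))$ off a set of finite measure, so the Borel-lemma argument yields $\limsup_{r\to\infty}N^{(1)}_f(r,D)/T_{f,A}(r)\ge c$. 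When $R_0<\infty$, the hypothesis $\limsup_{r\to R_0}T_{f,A}(r)/\log\frac1{R_0-r}=\infty$ together with $S(r,f,A)=O\!\big(\log\frac1{R_0-r}+\log^+T_{f,A}(r)\big)$ outside a set $E$ with $\int_E\frac{dt}{R_0-t}<\infty$ lets one pick $r_j\to R_0$, $r_j\notin E$, along which $S(r_j,f,A)=o(T_{f,A}(r_j))$ (slightly enlarging a maximizing sequence to leave $E$, using monotonicity of $T_{f,A}$), so again $\limsup_{r\to R_0}N^{(1)}_f(r,D)/T_{f,A}(r)\ge c$. Since $D\in|dA|$ gives $T_{f,D}(r)=d\,T_{f,A}(r)+O(1)$, it follows that
\begin{equation*}
\delta_1^{f,A,*}(D)=d-\limsup_{r\to R_0}\frac{N^{(1)}_f(r,D)}{T_{f,A}(r)}\le d-c.
\end{equation*}

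For part (ii), I would feed the same $\omega$ into Theorem~\ref{yamanoinonint} with $q=1$, $D_1=D$ and $L=\mathcal O_X(-(d-c)A)$, for which $\gamma_{A,L}=d-c$; the growth condition imposed on $f$ in (ii) is exactly (\ref{growthconditionruold}). In the truncation-one version of that theorem — with the logarithmic structure of $\omega$ used to keep the $\mathrm{Ric}(\omega)$ contribution equal to $2\rho$ (the first-order pole of $\omega$ along $D$) instead of the $2\rho m$ of the general statement — this gives
\begin{equation*}
\delta_1^{f,A}(D)\le\gamma_{A,L}+2\rho=(d-c)+2\rho=d-(c-2\rho),
\end{equation*}
which for $\rho=0$ recovers the disc bound of (i), consistently with $\delta_1^{f,A}(D)\le\delta_1^{f,A,*}(D)$.

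The main obstacle is the construction in the first step: exhibiting a jet differential that at once (i) carries a twist as negative as $d-c$, (ii) has order at most $n$, (iii) has base locus contained in $\supp D$, and (iv) detects $D$ with multiplicity one. Points (i)--(iii) are the substance of the effective Brotbek/Darondeau/Brotbek--Deng estimates, and this is exactly where the explicit bound $(n+1)^{n+3}(n+1+c/2)^{n+3}$ is consumed; point (iv) forces the whole construction into the logarithmic category for $(X,D)$ and, in turn, a refinement of Theorems~\ref{smtyamanoi} and~\ref{yamanoinonint} — both the improvement of the truncation level from $k$ to $1$ and, for (ii), the sharpening of the error term from $2\rho m$ to $2\rho$. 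Once these refinements are in place, the rest is the elementary bookkeeping sketched above.
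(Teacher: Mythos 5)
Your overall strategy (produce an effective jet differential from the Brotbek--Deng machinery, feed it into a truncation-one Second Main Theorem, then do the defect bookkeeping, running the non-integrated analogue for part (ii)) is the paper's strategy, and your final arithmetic in both parts is the right arithmetic. But the step you defer as "the main obstacle" is exactly where your setup goes wrong, and it is not a deferrable technicality. You propose a \emph{holomorphic} jet differential $\omega\in H^0(X,E^{GG}_{k,m}\Omega_X\otimes L^{-1})$ with $L=\mathcal O_X(-(d-c)A)$ and $J_kD\subset\pi_k^*(\omega=0)$, to be fed into Theorem \ref{smtyamanoi} after an "improvement of the truncation level from $k$ to $1$". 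No such improvement is available: the mechanism behind Theorem \ref{smtyamanoi} is the identity $\ord_z(j_k^*(f)J_kD)=\ord_zf^*D-\min\{k,\ord_zf^*D\}$, which intrinsically yields $N^{(k)}$, and since $N^{(1)}\le N^{(k)}$ a bound on $\delta_k^{f,A,*}(D)$ gives no bound on $\delta_1^{f,A,*}(D)$. The genuine truncation-one statement is Theorem \ref{smtXie}, for a \emph{logarithmic} jet differential $\mathcal P\in H^0(X,E^{GG}_{k,m}\Omega_X(\log D)\otimes A^{-\tilde m})$, and there the level-one counting function appears with the coefficient $m$ (because a degree-$m$ log jet differential pulled back by $f$ has poles of order up to $m$, not $1$, at each point of $f^{-1}(D)$): one gets $\tilde m\,T_{f,A}(r)\le m\,N^{(1)}_f(r,D)+S(r,f,A)$. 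Consequently the twist you need is $\tilde m>cm$, i.e. negativity growing linearly in the degree $m$ of the jet differential --- not the fixed twist $c\,A$ that your normalization $L\otimes[D]=\mathcal O_X(cA)$ amounts to after dividing by $s_D$. This is precisely what the paper extracts from Brotbek--Deng (Theorem \ref{cor4.5}), in Proposition \ref{DB}: for $k=n+1$ (not $k\le n$) and generic $D\in|dA|$ with the stated degree bound, there is $\mathcal P\in H^0(X,E^{GG}_{k,m}\Omega_X(\log D)\otimes A^{-\tilde m})$ with $\mathcal P(j_k(f))\not\equiv0$ and $\tilde m/m>c$; the degree bound is consumed in making $\tilde m/m>c$, and verifying this ratio is the actual content of the construction step.

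The same misidentification affects part (ii). Your replacement of the error term $2\rho m$ of Theorem \ref{yamanoinonint} by $2\rho$ is attributed to "the first-order pole of $\omega$ along $D$", but that is not where it comes from: in the paper's argument the balance equation is $tm(d-\delta)+2\rho=t\tilde m$, and the constraint $tm<1$ becomes $\delta>d-\tilde m/m+2\rho$ because \emph{both} sides of the Second Main Theorem scale with $m$ in the log-twisted setting (the defect enters weighted by $m$ through $mN^{(1)}$, and the twist is $\tilde m$), so the $m$'s cancel. The paper does not refine Theorem \ref{yamanoinonint}; it reruns the subharmonicity/Yau--Karp argument directly with the object of Proposition \ref{DB} and the pole estimate from the proof of Theorem \ref{smtXie}. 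As written, your proof rests on two unproved refinements (truncation $k\to1$ in Theorem \ref{smtyamanoi}, and $2\rho m\to2\rho$ in Theorem \ref{yamanoinonint}) that are not true in the form you need them, together with an existence claim for a jet differential with the wrong twist normalization; so there is a genuine gap.
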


We discuss some consequences of the above theorem. Let $X={\Bbb P}^n(\Bbb C)$ and $A={\mathcal O}_{\mathbb P ^n(\mathbb C)} (1)$. Let $f: {\Bbb C} \rightarrow {\Bbb P}^n(\Bbb C)$   a holomorphic map. 
Let $D$ be a hypersurface in ${\Bbb P}^n(\Bbb C)$ of degree  $d$.
Assume that the image of $f$ omits  $D$.  Then, from the definition, we can easily get $\delta _1^{f,A, *}(D)=d$. 
Hence, we  derives the following result from (i) in Theorem \ref{Brotbekdefect} by taking $c=2$:
	 {\it Let $f: {\Bbb C} \rightarrow {\Bbb P}^n(\Bbb C)$  with $n\ge 2$ be a holomorphic map. If the image of $f$  omits a generic smooth hypersurface $D$ in ${\Bbb P}^n(\Bbb C)$ of degree at least  $((n+1)(n+2))^{n+3}$, then $f$ must be constant.} 

From (\ref{mul}),  if the image of $f$ omits  $D$, then  $ \delta _1^{f, A} (D)\ge d$.
Thus, by applying  (ii) in Theorem \ref{Brotbekdefect} with $\rho=1$ and $c=2$, we can derive the following statement which is due to D.T. Nuynh (see \cite{H}):  {\it Let $M$  be a complete minimal surface immersed in 
${\Bbb R}^n$ and let $G : M\rightarrow  {\Bbb P}^{n-1}(\Bbb C)$  be its generalized Gauss map. If $G$ omits a generic smooth hypersurface $D$ in ${\Bbb P}^{n-1}(\Bbb C)$ of degree at least  $(n(n+1))^{n+2},$ then $M$ must be flat.}
 For details, see \cite{Fu1} or \cite{H}.

Theorem \ref{Brotbekdefect} is  based on the following result   which  improves the result of 
   D. T. Huynh, D. V. Vu and S. Y.  Xie (see \cite{HVX}, Theorem 3.1) with a better error term.
  \begin{theorem}\label{smtXie}
	Let $X$ be a smooth projective variety of dimension $n$.
	Let $D$ be a normal crossing divisor on $X$ and $A$ be an ample line bundle on $X$.  Let  
	\begin{equation*}
	\mathcal P \in H^0( X,E_{k,m}^{GG}\Omega _X( \log D ) \otimes  A^{-\tilde m}  ).
	\end{equation*}
	Let $f: B(R_0)\subset {\Bbb C} \rightarrow X$, $0<R_0 \leq \infty$, be a holomorphic map with $f(B(R_0)) \not\subset \text {supp}(D)$ and $\mathcal P (j_k(f)) \not  \equiv 0$.  Then, for any $\epsilon > 0,$ 
	\begin{equation*}
	\tilde mT_{f, A}(r) \leq mN^{(1)}_f(r, D)+S(r,f,A),
	\end{equation*}
	where $S(r,f,A)$ is evaluated as in Theorem \ref{smtyamanoi}.
	\end{theorem}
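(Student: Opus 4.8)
The plan is to run the ``logarithmic derivative'' proof of the Second Main Theorem, organised so that the error term comes out exactly as in Theorem~\ref{smtyamanoi}. The first step is to read $\mathcal P(j_k(f))$ as a section on $B(R_0)$. Fix a smooth Hermitian metric $h_A$ on $A$ with $c_1(A,h_A)>0$ and give $K_{B(R_0)}$ the flat metric $\|dz\|\equiv 1$. Since $\mathcal P$ is a logarithmic jet differential of weighted degree $m$ twisted by $A^{-\tilde m}$, the pull-back $\mathcal P(j_k(f))$ is a meromorphic section of $f^*A^{-\tilde m}\otimes K_{B(R_0)}^{\otimes m}$, not identically zero by hypothesis, whose polar divisor is supported on $f^{-1}(\operatorname{supp}D)$. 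Indeed, in a chart where $D=\{x_1\cdots x_\ell=0\}$, the pull-back is built from the coefficients of $\mathcal P$ evaluated along $f$ together with iterated derivatives $\frac{d^a}{dz^a}(x_j\circ f)$ for $j>\ell$ and $\frac{d^a}{dz^a}\log(x_j\circ f)$ for $j\le\ell$; the latter has a pole of order exactly $a$ at a point of $f^{-1}(D_j)$, \emph{independently} of the intersection multiplicity there, while the total weighted degree is $m$, so the polar divisor of $\mathcal P(j_k(f))$ has multiplicity at most $m$ at each point of $f^{-1}(\operatorname{supp}D)$, counted once. Hence the counting function of the poles of $\mathcal P(j_k(f))$ is at most $m\,N^{(1)}_f(r,D)$.

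Second, I would apply the Poisson--Jensen (First Main Theorem) identity to this meromorphic section on $B(R_0)$. Because $c_1\big(f^*A^{-\tilde m}\otimes K_{B(R_0)}^{\otimes m}\big)=-\tilde m\,f^*c_1(A,h_A)$, the curvature term contributes precisely $\tilde m\,T_{f,A}(r)$, and one obtains
\[ \tilde m\,T_{f,A}(r)=\frac1{2\pi}\int_0^{2\pi}\log\big\|\mathcal P(j_k(f))(re^{i\theta})\big\|\,d\theta-\big(\text{counting fn.\ of zeros}\big)+\big(\text{counting fn.\ of poles}\big)+O(1). \]
Dropping the non-negative zero-counting term, replacing $\log$ by $\log^+$, and inserting the bound on the pole-counting term from the previous step, the theorem is reduced to the proximity estimate
\[ \frac1{2\pi}\int_0^{2\pi}\log^+\big\|\mathcal P(j_k(f))(re^{i\theta})\big\|\,d\theta\le S(r,f,A)+O(1). \]

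The crux — and the source of the improved error term over \cite{HVX} — is this last estimate. I would cover $X$ by finitely many charts $U_\alpha$, each relatively compact inside a larger chart arising from a projective embedding $\Psi\colon X\hookrightarrow\mathbb P^M$ attached to a power of $A$, chosen so that $D$ is a union of coordinate hyperplanes on each $U_\alpha$ and the coordinates are restrictions of affine coordinates $\tau_i/\tau_0$ of $\mathbb P^M$. Using a partition of unity $\{\chi_\alpha\}$ subordinate to $\{U_\alpha\}$ and the elementary inequality $\log\!\big(\sum_\alpha\chi_\alpha(f)\,(\cdot)_\alpha\big)\le\sum_\alpha \mathbf 1_{\operatorname{supp}\chi_\alpha}(f)\log^+(\cdot)_\alpha+O(1)$, one reduces to bounding, wherever $f$ takes values in $U_\alpha$, the local expression of $\mathcal P(j_k(f))$. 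On $U_\alpha$ the coefficients of $\mathcal P$, the frame norm $\|e_\alpha\|_{h_A}^{-\tilde m}$, and the coordinate values are all bounded; so the only unbounded contributions are monomials in the logarithmic derivatives $\frac{(u_i)^{(a)}}{u_i}$, where $u_i=(\tau_i/\tau_0)\circ f$ for the non-polar directions and the analogous logarithmic derivatives of the sections cutting out the $D_j$ for the polar directions, and each monomial of $\mathcal P$ involves at most $m$ such factors (counted with the weighted degree). Since $T(r,u_i)\le T_{f,A}(r)+O(1)$, and likewise for the $D_j$-sections, the sharp logarithmic derivative lemma underlying Theorem~\ref{smtyamanoi} — applied in the relevant regime ($R_0=\infty$ or $R_0<\infty$), together with the usual calculus (Borel-type) lemma to absorb the exceptional set — bounds each $\frac1{2\pi}\int_0^{2\pi}\log^+\big|\frac{(u_i)^{(a)}(re^{i\theta})}{u_i(re^{i\theta})}\big|\,d\theta$ by $S(r,f,A)$; summing the finitely many terms yields the estimate, with the weight $m$ emerging in front of the $\log^+T_{f,A}$ term exactly as in Theorem~\ref{smtyamanoi}.

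The main obstacle is precisely this reduction: controlling $\log^+\big\|\mathcal P(j_k(f))\big\|$ uniformly as the chart containing $f(re^{i\theta})$ varies with $\theta$, and carefully expanding the iterated logarithmic derivatives $\frac{d^a}{dz^a}\log(x_j\circ f)$ so as to simultaneously (i) see that the pole order of $\mathcal P(j_k(f))$ along $f^{-1}(D)$ is at most $m$ and (ii) keep the number of genuine logarithmic-derivative factors bounded by the weighted degree. Once this book-keeping is in place, the sharp logarithmic derivative lemma of Theorem~\ref{smtyamanoi} is invoked as a black box and the claimed error term follows; note that the special case $D=\varnothing$ recovers, in particular, the Green--Griffiths-type degeneracy statement, which is a useful consistency check on the argument.
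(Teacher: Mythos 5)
Your proposal has the same skeleton as the paper's proof: (a) the pole order of $\mathcal P(j_k(f))$ along $f^{-1}(\supp D)$ is at most $m$ at each point, independently of the intersection multiplicity, which yields the $mN^{(1)}_f(r,D)$ term; (b) Poincar\'e--Lelong plus Green--Jensen applied to the section twisted by $A^{-\tilde m}$ produces the $\tilde m\,T_{f,A}(r)$ term; (c) everything reduces to a proximity estimate for $\log^+\big\|\mathcal P(j_k(f))\big\|$, handled via finitely many charts with rational coordinates, a partition of unity, logarithmic derivatives, and a Borel-type growth lemma with the choices $R=r+1/\log^{1+\epsilon}T_{f,A}(r)$ (resp.\ $R=r+(R_0-r)/eT_{f,A}(r)$). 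The one genuine difference is in how (c) is implemented. The paper does not estimate $\log^+$ termwise: it first proves an $L^t$-integral bound (Lemma \ref{jetdefest}), namely $\int_0^{2\pi}\|\mathcal P(j_k(f))\|_{h^{-1}}^t\,d\theta/2\pi\le K\big(RT_{f,A}(R)/r(R-r)\big)^p$ for $0<tm<p<1$, obtained by applying H\"older's inequality with the weights $s_{j,i}=j\alpha_{j,i}/m$ to each monomial and invoking the $L^t$ logarithmic derivative lemma (Lemma \ref{fujimoto}) factor by factor; the coefficient $m(1+\epsilon)$ then falls out of a single application of the concavity of $\log$, by taking $p<(1+\epsilon)mt$. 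Your route --- bounding each $\frac1{2\pi}\int_0^{2\pi}\log^+\big|u_i^{(a)}/u_i\big|\,d\theta$ directly by a sharp logarithmic derivative lemma and combining --- can be made to work, but your bookkeeping is imprecise at exactly the point that produces the improved error term: you assert each such integral is bounded by $S(r,f,A)$ and that ``summing the finitely many terms'' still yields weight $m$, which is inconsistent, since adding several quantities each of size $m(1+\epsilon)\log^+T_{f,A}(r)$ inflates the constant. What you need (and presumably intend) is that the $a$-th logarithmic derivative contributes with coefficient $a$, so that the weighted-degree condition $\sum_j j|\alpha_j|=m$ makes each \emph{monomial} contribute exactly $m(1+\epsilon)\log^+T_{f,A}(r)+\cdots$, and the sum over monomials and charts is absorbed via $\log^+\sum\le\max\log^+{}+O(1)$ rather than by adding the individual bounds. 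The paper's H\"older/$L^t$ device is precisely a clean mechanism for enforcing this weighting automatically, and is why it states the hypothesis $0<tm<p<1$ rather than a condition on each factor separately.
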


\section{Notations and preparations}
In this section, we prove a Main Lemma for jet differentials which will be used in the proofs of non-integrated defect relations.

We first recall a lemma regarding the logarithmic derivatives. 
\begin{lemma}[\cite{book:minru2}, Proposition 4.2.1]\label{fujimoto}
Let $\varphi$ be a nonzero meromorphic function on $B(R_0)\subset {\Bbb C},$ $0<R_0 \leq \infty$. Let  $l$ be a  positive integer and $t, p,  r_0$ be real numbers with $0<tl<p<1$ and $0<r_0<R_0$. Then there exists a positive constant $K$ such that, for $r_0<r<R<R_0$, 
 \begin{equation*}
   {\int_0^{2\pi } {\left| \Big( {\varphi ^{(l)} \over
   				\varphi } \Big)(re^{i\theta } )\right|}^t} {d\theta \over
   2\pi } \leq K \Big( {R  \over r(R-r)}  T_{\varphi}(R)\Big)^{p}.
\end{equation*}
\end{lemma}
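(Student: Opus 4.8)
\textbf{Proof plan for Lemma \ref{fujimoto} (lemma on logarithmic derivatives).}

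The plan is to reduce everything to the case $l=1$ and then invoke the Poisson--Jensen formula together with Hölder's inequality. First I would establish the one-step estimate: writing $\varphi^{(1)}/\varphi$ as a logarithmic derivative, the Poisson--Jensen formula on the disc $B(R)$ expresses $\log|\varphi|$ in terms of its boundary values on $\partial B(R)$ and the zeros and poles of $\varphi$ inside; differentiating this representation with respect to $z$ gives a Cauchy-type integral formula for $\varphi'/\varphi$. Taking absolute values and using that the Poisson kernel and its $z$-derivative are controlled by $R/(R-r)$ on the circle $|z|=r$, one obtains a pointwise bound for $|(\varphi'/\varphi)(re^{i\theta})|$ by an integral of $|\log|\varphi||$ over $\partial B(R)$ plus a sum over the zeros/poles. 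Integrating in $\theta$, applying the concavity of $x\mapsto x^t$ for $0<t<1$ (or Hölder), and bounding the resulting logarithmic integrals by the characteristic function via Jensen's formula, $m(R,\varphi)+m(R,1/\varphi)+N(R,\varphi)+N(R,1/\varphi) = O(T_\varphi(R))$, yields the claim with exponent $t<1$ and the factor $(R/(r(R-r)))^{t}$; one absorbs the extra logarithmic loss by replacing $t$ with a slightly larger $p$ with $t<p<1$, which is exactly why the hypothesis $0<tl<p<1$ appears.

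Next I would handle general $l$ by iteration. The identity
\begin{equation*}
\frac{\varphi^{(l)}}{\varphi} = \frac{\varphi^{(l)}}{\varphi^{(l-1)}}\cdot\frac{\varphi^{(l-1)}}{\varphi^{(l-2)}}\cdots\frac{\varphi'}{\varphi}
\end{equation*}
expresses the $l$-th logarithmic derivative as a product of $l$ first logarithmic derivatives of the successive derivatives $\varphi^{(j)}$. Applying Hölder's inequality with exponents chosen so that each factor is raised to the power $tl$, the integral of $|\varphi^{(l)}/\varphi|^{t}$ is bounded by a product of $l$ integrals of the form $\int_0^{2\pi}|(\varphi^{(j)}/\varphi^{(j-1)})(re^{i\theta})|^{tl}\,d\theta/2\pi$. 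Each of these is controlled by the $l=1$ estimate applied to the meromorphic function $\varphi^{(j-1)}$ with exponent $tl<p<1$, giving a bound $K_j\big(\tfrac{R}{r(R-r)}T_{\varphi^{(j-1)}}(R)\big)^{p/l}$. Finally, one needs the standard fact that $T_{\varphi^{(j)}}(R) = O(T_\varphi(R)) + S$ — more precisely $T_{\varphi'}(R)\le 2T_\varphi(R)+O(1)$ type bounds, or an estimate on a slightly larger circle — so that the product telescopes to a single factor $\big(\tfrac{R}{r(R-r)}T_\varphi(R)\big)^{p}$ up to a constant.

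The main obstacle I expect is the careful bookkeeping in the second step: controlling $T_{\varphi^{(j)}}(R)$ by $T_\varphi(R)$ without losing the sharp form of the factor $R/(r(R-r))$, since naively passing to derivatives introduces its own $(R-r)^{-1}$ losses that must be reconciled with the exponent $p$. The clean way around this is to not iterate the circle radius but instead use the single Poisson--Jensen representation directly for $\varphi^{(l)}/\varphi$ — i.e., differentiate the Poisson--Jensen formula $l$ times rather than once — so that all the $(R-r)^{-1}$ factors are collected at one stroke from the derivatives of the Poisson kernel, and the zeros/poles of $\varphi$ (not of its derivatives) are the only ones entering. That keeps the right-hand side genuinely in terms of $T_\varphi$, and the condition $tl<p$ is precisely what lets the $t$-th power of the $l$-fold differentiated kernel be integrated against the concavity bound. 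Since this is a cited result (\cite{book:minru2}, Proposition 4.2.1), I would in the paper simply quote it, but the sketch above is how one proves it from scratch.
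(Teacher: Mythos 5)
The paper does not prove this lemma at all: it is quoted from Proposition 4.2.1 of \cite{book:minru2} (with an alternative reference to Proposition A5.1.4 of \cite{book:minru}), with only a remark that ``minor modifications'' of those statements are needed. So there is no in-paper argument to compare against; your sketch is in effect a reconstruction of the cited proof, and its overall architecture --- differentiated Poisson--Jensen for $l=1$, then H\"older on the telescoping product $\varphi^{(l)}/\varphi=\prod_j \varphi^{(j)}/\varphi^{(j-1)}$ with exponents summing correctly so that each factor carries the power $tl<1$ --- is exactly the standard (Fujimoto/Ru) route. Two points in your write-up, however, do not close as stated.

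First, your proposed ``clean way around'' the bookkeeping --- differentiating the Poisson--Jensen representation $l$ times --- does not directly produce $\varphi^{(l)}/\varphi$: it produces $(\log\varphi)^{(l)}=(\varphi'/\varphi)^{(l-1)}$. To recover $\varphi^{(l)}/\varphi$ you must invoke the Fa\`a di Bruno/Bell-polynomial identity expressing $\varphi^{(l)}/\varphi$ as a sum of monomials $\prod_j\big((\log\varphi)^{(j)}\big)^{m_j}$ with $\sum_j jm_j=l$, and then run H\"older again on each monomial; the condition $tl<1$ is what makes the resulting kernels and the sums $\sum_k |re^{i\theta}-a_k|^{-jtm_j}$ integrable. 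So this route is not actually free of the combinatorial bookkeeping. Second, in the telescoping route you correctly flag that $T_{\varphi^{(j)}}(R)$ must be controlled by $T_{\varphi}(R)$, but you leave it unresolved; the standard fix is to interpolate radii $r<\rho_1<\cdots<\rho_l=R$ with $\rho_j-\rho_{j-1}=(R-r)/l$, use $T_{\varphi^{(j)}}(\rho)\le (j+1)T_{\varphi}(\rho')+O\big(\log^+T_{\varphi}(\rho')+\log\frac{1}{\rho'-\rho}\big)$ on successive annuli, and observe that each step only multiplies the factor $\frac{R}{r(R-r)}$ by a harmless constant. The slack $tl<p<1$ exists precisely to absorb these multiplicative constants and logarithmic losses (via $\log X\le C_\varepsilon X^\varepsilon$ and $X^{tl}\le X^{p}$ for $X\ge 1$), not, as you suggest, to fix the $l=1$ case, which already holds with exponent $t$. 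With those two repairs your plan is a correct proof of the lemma; for the paper itself, quoting the reference as the authors do is the appropriate course.
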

Note that Proposition 4.2.1 in   \cite{book:minru2} states slightly different from the above statement. But with some minor  modifications, as well as some minor modifications of 
Proposition 4.1.6 in  \cite{book:minru2},  it is not difficult to derive the above Lemma. For an alternative proof, see Proposition A5.1.4 in \cite{book:minru} (note: in Proposition A5.1.4, the condition $0<\alpha l<1/2$ can be weaken to  $0<\alpha l<1$).

We now recall the concept of the Green-Griffiths jet differentials and the  Green-Griffiths logarithmic  jet differentials. 

\noindent$\bullet$ {\bf Jet bundle}.   Let $X$ be a complex manifold of dimension $n$.
Let $x\in X$ and let   $J_k(X)_x$
 be  the set of equivalence classes of holomorphic maps  $f:  (\bigtriangleup, 0)\rightarrow (X, x)$, 
where $ \bigtriangleup$ is a disc of unspecified positive radius,
with the equivalence relation $f\sim g$  if and only if 
$f^{(j)}(0)=g^{(j)}(0)$  for $0\leq j\leq k$, when computed in some local coordinate
system of $X$ near $x$. The equivalence class of $f$ is denoted by $j_k(f)$, which is called the $k$-jet of $f$. A $k$-jet $j_k(f)$ is said
to be {\it regular} if $f'(0)\not=0$. 
Set 
$$J_k(X)=\cup_{x\in X} J_k(X)_x$$
and consider the natural projection
$$\pi_k: J_k(X)\rightarrow X.$$
Then $J_k(X)$ is a complex manifold which carries the structure of a holomorphic fiber bundle over $X$, which is called the {\it $k$-jet bundle over $X$}. When $k = 1,$ $J_1(X)$ is canonically isomorphic to the holomorphic tangent bundle  $TX$ of $X$.
On $J_kX$, there is a natural ${\Bbb C}^*$ action defined by, for any $\lambda\in {\Bbb C}^*$ and $j_k(f)\in J_kX$, set 
$$\lambda\cdot j_k(f)=j_k(f_{\lambda})$$
where $f_{\lambda}$ is given by  $t\mapsto f(\lambda t)$. 

\noindent$\bullet$ {\bf Jet Differential}. A {\it jet differential of order $k$} is a holomorphic map $\omega: J_kX\rightarrow {\Bbb C}$, and 
a {\it jet differential of oder $k$ and degree $m$} is a holomorphic map $\omega: J_kX\rightarrow {\Bbb C}$
such that $$\omega(\lambda\cdot j_k(f))=\lambda^m\omega(j_k(f)).$$
The  Green-Griffiths  sheaf ${\mathcal E}_{k, m}^{GG}$ of oder $k$ and degree $m$
is defined as follows: for any open set $U\subset X$, 
$${\mathcal E}_{k, m}^{GG}(U)=\{\mbox{jet differentials of order $k$ and degree $m$ on}~U\}.$$
It is a locally free sheaf, we denote its vector bundle on $X$ as $E_{k, m}^{GG}\Omega_X$.
Let $\omega \in {\mathcal E}_{k, m}^{GG}(U)$,  the differentiation 
$d\omega\in {\mathcal E}_{k+1, m+1}^{GG}(U)$ is defined by 
$$d\omega(j_{k+1}(f))= {d\over dt} \omega(j_k(f)(t))\Big|_{t=0}.$$
Note that, formally, we have $d(d^pz_j)=d^{p+1}z_j$ for the local coordinates $(z_1, \dots, z_n)$.

\noindent $\bullet$  {\bf  Logarithmic Jet Differential}.  In the logarithmic setting,  let $D \subset X$ be a normal crossing divisor on $X$. Recall that $D \subset X$ is {\it in normal crossing} 
if, at each $x\in X$,  there exist some local coordinates $z_1,...,z_r, z_{r+1},..., z_n$  centered at $x$ such that $D$ is locally defined by
$$D = \{z_1\cdots z_r = 0\}.$$ 
We define the {\it sheaf of logarithmic Green-Griffiths jet differentials of order $k$ and degree $m$} as follows:
 for any open set $U\subset X$, 
\begin{eqnarray*}
&~&{\mathcal E}_{k, m}^{GG}\Omega_X(\log D)(U)\\
&=&\{\mbox{jet differentials of order $k$ and degree $m$ on}~U\backslash D ~\mbox{with at worst log poles on}~D\},
\end{eqnarray*}
where $\omega$ has log poles on $D$ means that for every local coordinates $(z_1,\dots, z_n)$ such that 
$D = \{z_1\cdots z_r = 0\}$, we have, outside $D$,
$$\omega(z)=\sum_{|I_1|+\cdots+ k|I_k|=m} a_{I_1,\dots, I_k}(z) (dz)^{I_1}\cdots (d^kz)^{I_k}$$
where $I_p=(i_{p, 1}, \dots, i_{p,n})\in {\Bbb N}^n$, $|I_p|=i_{p, 1}+ \cdots+i_{p,n}$,
$(d^pz)^{I_p} = (d^pz_1)^{i_{p,1}}\cdots(d^pz_n)^{i_{p,n}},$
$$a_{I_1,\dots, I_k}(z)={b_{I_1,\dots, I_k}(z)\over z_1^{i_{1,1}+\cdots +i_{k, 1}}\cdots z_r^{i_{1,r}+\cdots +i_{k, r}}}$$
and $b_{I_1,\dots, I_k}$ is a holomorphic function on $U$.

We prove the following Main Lemma for jet differentials.
\begin{lemma}[Main Lemma for jet differentials]\label{jetdefest}

	(i)  Let $X$ be a smooth projective variety of dimension $n$.
Let $D$ be a normal crossing divisor on $X$.  Let  
 \begin{equation*}
\mathcal P \in H^0( X,E_{k,m}^{GG}\Omega _X( \log D ))
\end{equation*}
be a logarithmic $k$-jet ($k\geq 1$) differential. Let $f: B(R_0)\rightarrow X$, $0<R_0 \leq \infty$, be a holomorphic map such that $f(B(R_0)) \not\subset  \text {supp}(D)$ and $\mathcal P (j_k(f)) \not  \equiv 0$. Let $t, p, r_0$ be positive real numbers with $0<tm<p<1$ and $0<r_0<R_0$. Then there exists a positive constant $K$  such that, for $r_0<r<R<R_0$, 
 \begin{equation*}
\int_0^{2\pi } \big| \mathcal P( j_k( f ) )( re^{i\theta } ) \big|^t {d\theta\over 2\pi} \leq K \Big( {R\over r(R-r)} T_{f, A}(R)\Big)^p,
\end{equation*}
where $A$ is any (fixed) ample divisor on $X$.

 (ii) With the same assumptions above. Let
 $$\mathcal P \in H^0( X,E_{k,m}^{GG}\Omega _X( \log D )\otimes L^{-1}),$$
 where  $(L, h)$ is an Hermitian line bundle on $X$.
 Assume that  $\mathcal P (j_k(f)) \not  \equiv 0$. 
Then  \begin{equation*}
\int_0^{2\pi } \big\| \mathcal P( j_k( f ) )( re^{i\theta})\big\|_{h^{-1}}^t  {d\theta\over 2\pi}\leq K \Big( {R\over r(R-r)} T_{f, A}(R)\Big)^p.
\end{equation*} 
 \end{lemma}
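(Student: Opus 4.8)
The plan is to localize on $X$ and reduce everything to the classical estimate for logarithmic derivatives, Lemma~\ref{fujimoto}. Since $X$ is projective and $A$ ample, I would first cover $X$ by finitely many coordinate charts $(U_\alpha;z_1^\alpha,\dots,z_n^\alpha)$ adapted to $D$ — so that $D\cap U_\alpha=\{z_1^\alpha\cdots z^\alpha_{r_\alpha}=0\}$ — chosen so that each $z_j^\alpha$ is the restriction to $U_\alpha$ of a rational function on $X$, realized as a ratio of global sections of a sufficiently positive power of $A$ (with $z_1^\alpha,\dots,z^\alpha_{r_\alpha}$ local defining functions of the components of $D$ meeting the chart), and also fix a relatively compact shrinking $\{U_\alpha'\}$ still covering $X$. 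For a chart actually met by $f$, passing to a reduced representation of $f$ with respect to that power shows the pullbacks $\phi_{\alpha,j}:=z_j^\alpha\circ f$ are meromorphic on $B(R_0)$ with $T_{\phi_{\alpha,j}}(r)\le c\,T_{f,A}(r)+O(1)$ by the First Main Theorem; for the $D$-adapted indices one has $\phi_{\alpha,j}\not\equiv0$ (otherwise $f^{-1}(U_\alpha)$ would be a non-empty open subset of $f^{-1}(\supp D)$, impossible since $f(B(R_0))\not\subset\supp D$ forces the latter to be discrete), while any other $\phi_{\alpha,j}$ vanishing identically simply kills every monomial in which it occurs. Finally, $0<tm$ forces $m\ge1$, so $f$ is non-constant (a jet differential of positive degree vanishes on constant jets by $\mathbb C^*$-homogeneity), whence $T_{f,A}(r)\to\infty$.

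Next I would write $\mathcal P$ out in each chart. Using the logarithmic structure, on $f^{-1}(U_\alpha)$,
\begin{equation*}
\mathcal P(j_k(f))=\sum_{I}\,b_{\alpha,I}(f)\prod_{j=1}^{r_\alpha}\prod_{p=1}^k\Bigl(\frac{\phi_{\alpha,j}^{(p)}}{\phi_{\alpha,j}}\Bigr)^{i_{p,j}}\prod_{j=r_\alpha+1}^n\prod_{p=1}^k\bigl(\phi_{\alpha,j}^{(p)}\bigr)^{i_{p,j}},
\end{equation*}
where $I=(I_1,\dots,I_k)$ runs over the multi-indices with $|I_1|+2|I_2|+\cdots+k|I_k|=m$ and each $b_{\alpha,I}$ is holomorphic on $U_\alpha$. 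On the set $E_\alpha(r):=\{\theta:f(re^{i\theta})\in U_\alpha'\}$ the functions $b_{\alpha,I}(f)$ and $\phi_{\alpha,j}=z_j^\alpha\circ f$ (for $j>r_\alpha$) are uniformly bounded, so rewriting each factor $\phi_{\alpha,j}^{(p)}=\phi_{\alpha,j}\cdot(\phi_{\alpha,j}^{(p)}/\phi_{\alpha,j})$ in the second product gives, on $E_\alpha(r)$,
\begin{equation*}
\bigl|\mathcal P(j_k(f))(re^{i\theta})\bigr|\ \le\ C_\alpha\sum_{I}\ \prod_{j=1}^n\prod_{p=1}^k\Bigl|\frac{\phi_{\alpha,j}^{(p)}}{\phi_{\alpha,j}}(re^{i\theta})\Bigr|^{i_{p,j}}.
\end{equation*}
Since $\{U_\alpha'\}$ covers $X$, $[0,2\pi)=\bigcup_\alpha E_\alpha(r)$; by subadditivity of the integral and the elementary bound $(\sum_I\cdot)^t\le C\sum_I(\cdot)^t$ the lemma is then reduced to estimating, for each $\alpha$ and $I$, the integral $\int_0^{2\pi}\prod_{p,j}|\phi_{\alpha,j}^{(p)}/\phi_{\alpha,j}|^{t\,i_{p,j}}\,\frac{d\theta}{2\pi}$.

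For a fixed $I$, I would apply H\"older's inequality over the pairs $(p,j)$ with $i_{p,j}>0$, using exponents $w_{(p,j)}$ defined by $1/w_{(p,j)}=t\,i_{p,j}+\delta$ with $\delta=(1-t\sum_p|I_p|)/\#\{(p,j):i_{p,j}>0\}$. This is legitimate because $\sum_p|I_p|=\sum_{p,j}i_{p,j}\le\sum_p p|I_p|=m$ and $tm<p<1$, so $\delta>0$, the $1/w_{(p,j)}$ are positive and sum to $1$, and $t\,i_{p,j}w_{(p,j)}<1$. H\"older bounds the integral by $\prod_{(p,j)}\bigl(\int_0^{2\pi}|\phi_{\alpha,j}^{(p)}/\phi_{\alpha,j}|^{t\,i_{p,j}w_{(p,j)}}\,\frac{d\theta}{2\pi}\bigr)^{1/w_{(p,j)}}$, and to each inner integral I would apply Lemma~\ref{fujimoto} with derivative order $l=p\le k$, $L^t$-exponent $t\,i_{p,j}w_{(p,j)}$, and output exponent $p$ — admissible since $\bigl(t\,i_{p,j}w_{(p,j)}\bigr)p<p<1$ — obtaining $K_{(p,j)}\bigl(\tfrac{R}{r(R-r)}T_{\phi_{\alpha,j}}(R)\bigr)^{p}$. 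Multiplying these back (the $1/w_{(p,j)}$ sum to $1$) and using $T_{\phi_{\alpha,j}}(R)\le c'\,T_{f,A}(R)$ for $R\ge r_0$ yields exactly $K_{\alpha,I}\bigl(\tfrac{R}{r(R-r)}T_{f,A}(R)\bigr)^{p}$; summing over the finitely many $\alpha$ and $I$ completes (i). (Note that the logarithmic structure keeps every pole of $\mathcal P(j_k(f))$ of order $\le m$, so $tm<1$ is exactly what makes these integrals finite in the first place.)

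For (ii), I would trivialize $L$ over $U_\alpha$ by a holomorphic frame $e_\alpha$ and write $\mathcal P|_{U_\alpha}=\mathcal P_\alpha\otimes e_\alpha^{*}$ with $\mathcal P_\alpha$ a local logarithmic $k$-jet differential; then $\|\mathcal P(j_k(f))\|_{h^{-1}}=|\mathcal P_\alpha(j_k(f))|\cdot\|e_\alpha\|_h(f)^{-1}$, and since $\|e_\alpha\|_h$ is continuous and strictly positive it is bounded above and below on $\overline{U_\alpha'}$, so on $E_\alpha(r)$ the metric factor is harmless and the bound follows verbatim from (i) applied to each $\mathcal P_\alpha$. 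I expect the main obstacle to be the globalization in the first paragraph: arranging coordinate charts adapted to $D$ whose $f$-pullbacks are genuine meromorphic functions on $B(R_0)$ with characteristic $O(T_{f,A})$, since Lemma~\ref{fujimoto} only speaks about globally meromorphic functions. Once that is in place, the H\"older bookkeeping and the final summation are routine, and the hypothesis $0<tm<p<1$ is precisely what is needed for the weights $w_{(p,j)}$ to exist and for Lemma~\ref{fujimoto} to apply with the required output exponent $p$.
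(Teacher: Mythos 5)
Your overall strategy coincides with the paper's: cover $X$ by finitely many rational coordinate charts adapted to $D$, expand $\mathcal P(j_k(f))$ locally into monomials, absorb the holomorphic coefficients and the non-$D$ factors (rewritten as $f_{i}^{(p)}=f_i\cdot(f_i^{(p)}/f_i)$) into bounded constants on a compactly contained subcover (the paper uses a partition of unity instead, which is equivalent), reduce to $\int\prod_{p,j}|\phi_{j}^{(p)}/\phi_{j}|^{t i_{p,j}}$ via finite subadditivity, and finish with H\"older plus Lemma~\ref{fujimoto} and $T_{\phi_{j}}=O(T_{f,A})$; part (ii) is handled exactly as in the paper. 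However, there is a genuine error in your H\"older step: the weights $1/w_{(p,j)}=t\,i_{p,j}+\delta$ do sum to $1$, but the resulting inner integrals need not satisfy the hypothesis of Lemma~\ref{fujimoto}. That lemma requires (derivative order)$\times$($L^t$-exponent) to be strictly less than the output exponent, which is less than $1$; your justification ``$(t\,i_{p,j}w_{(p,j)})p<p<1$'' conflates the derivative order with the output exponent and does not verify this. Concretely, take $k=2$ and the monomial $(dz_1)^2(d^2z_1)$, so $m=4$, $i_{1,1}=2$, $i_{2,1}=1$, and choose $t=1/5$ with output exponent $9/10$ (legitimate, since $tm=4/5<9/10<1$). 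Then $\delta=(1-3t)/2=1/5$, so for the pair $(2,1)$ you get $1/w_{(2,1)}=t+\delta=2/5$ and the inner integral carries the exponent $t\,i_{2,1}w_{(2,1)}=1/2$ on a \emph{second} derivative; Lemma~\ref{fujimoto} would need $2\cdot\tfrac12<p<1$, which is impossible, so that factor cannot be estimated.

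The missing idea is that the weights must be proportional to $p\,i_{p,j}$ (derivative order times multiplicity), not to $i_{p,j}$ alone. The paper takes $s_{p,j}=p\,i_{p,j}/m$, so that $\sum_{p,j}s_{p,j}=1$ and each inner integral has exponent $t\,i_{p,j}/s_{p,j}=tm/p$ on a $p$-th logarithmic derivative, making the relevant product equal to $tm<p<1$ uniformly over all factors — this is precisely where the hypothesis $tm<p<1$ enters. With that substitution (and no extra slack $\delta$ needed), the remainder of your argument — the chart construction, the boundedness on $\overline{U'_\alpha}$, the comparison $T_{\phi_{\alpha,j}}\le c\,T_{f,A}+O(1)$, and the treatment of the metric factor in (ii) — goes through and reproduces the paper's proof.
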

 \begin{proof}
(i) For every point $x$ in $X$, there exists a local chart $\left(U_x, \phi_x = (z_{1,x},...,z_{n,x})\right)$ around $x$ such that $\phi_x$ is a rational function on $X$ 
and $D|_{U_x} = \{z_{1,x}\cdots z_{s,x}=0 \} $ for some $0 \leq s \leq n$. If $s=0$, we mean $D\cap{U_x} = \emptyset$. Since $X$ is compact, we can cover $X$ by finitely many such local charts $\{(U_\lambda, \phi _\lambda)\}$. Let $\{\chi_\lambda\}$ be a partition of unity subordinated to $\{U_\lambda\}$. Note that $ \text {supp}(\chi_\lambda) \subset U_\lambda$.
 Write $\phi_\lambda (f) = (f_{1,\lambda},...,f_{n,\lambda}) : f^{-1}(U_\lambda)  \to \mathbb {C}^n$. On $f^{-1}(U_\lambda)$, $\mathcal P(j_k(f))$ can be expressed as 
\begin{eqnarray}\label{locrep}
&~&\mathcal P(j_k(f))\Big|_{ f^{-1}(U_\lambda) }\\
 &=&\sum_{
 	\alpha _1,...,\alpha _k \in \mathbb N^n, \sum_{j=1}^k j|\alpha_j| =m} 
	 a_{\alpha _1,...,\alpha _k} \left( \prod\limits_{i = 1}^s \left( {f_{i,\lambda}^{(1)} \over f_{i,\lambda}}\right)^{\alpha _{1,i}}
 \prod\limits_{i = s + 1}^n (f_{i,\lambda}^{(1)})^{\alpha _{1,i}}  \right) \nonumber  \\ 
 &~&\cdots \left( \prod\limits_{i = 1}^s \left( {f_{i,\lambda}^{(k)} \over f_{i,\lambda}}\right)^{\alpha _{k,i}}\prod\limits_{i = s + 1}^n (f_{i,\lambda}^{(k)})^{\alpha _{k,i}}  \right)\nonumber
	 \end{eqnarray}
where $a_{\alpha _1,...,\alpha _k}$ are locally defined holomorphic functions on $f^{-1}(U_\lambda)$,  $\alpha_j = (\alpha_{j,1},...,\alpha_{j,n})$ and 
$|\alpha_j| = \alpha_{j,1}+\cdots +\alpha_{j,n}$ for $1 \leq j \leq k$.
Then, on $f^{-1}(U_\lambda)$,
 	 	 \begin{eqnarray*}
 		\big|\mathcal P(j_k(f))\big|&\leq&\sum\limits_{\begin{subarray}{l} 
 				\alpha _1,...,\alpha _k 
 		\end{subarray}}
 	\left|a_{\alpha _1,...,\alpha _k}\left(\prod\limits_{j = 1}^k
 	\left( \prod\limits_{i = 1}^s \left({f_{i,\lambda}^{(j)} \over f_{i,\lambda}}\right)^{\alpha _{j,i}} \prod\limits_{i = s + 1}^n (f_{i,\lambda}^{(j)})^{\alpha _{j,i}}  \right)\right) \right| \\
 		&\leq&\sum\limits_{\begin{subarray}{l} 
 				\alpha _1,...,\alpha _k 
 		\end{subarray}} \left|a_{\alpha _1,...,\alpha _k}\left( \prod\limits_{j = 1}^k 
 	\left( \prod\limits_{i = 1}^n \left({f_{i,\lambda}^{(j)} \over f_{i,\lambda}}\right)^{\alpha _{j,i}} \prod\limits_{i = s + 1}^n (f_{i,\lambda})^{\alpha _{j,i}} \right)\right)\right|,
 	\end{eqnarray*}
here $\alpha _1,...,\alpha _k$ run over $\mathbb N^n$ with $|\alpha _1| + 2|\alpha _2| + ... + k|\alpha _k| = m.$
Since $\chi_\lambda$ has compact support, we can find that 
 $$
 \left|\chi_\lambda (f)a_{\alpha _1,...,\alpha _k}\prod_{j=1}^k \left(\prod_{i=s+1}^n(f_{i,\lambda})^{\alpha_{j,i}}\right) \right|
 $$ 
 is bounded. Also note that $0<t<1$ by assumption. Hence we get
 	
 	\begin{eqnarray*}
 		\chi_\lambda (f) \big|\mathcal P(j_k(f)) \big|^t \leq C_\lambda \sum_{\alpha _1,...,\alpha _k \in \mathbb N^n} \left|\prod\limits_{j = 1}^k \prod\limits_{i = 1}^n \left({f_{i,\lambda}^{(j)} \over f_{i,\lambda}}\right)^{\alpha _{j,i}} \right|^t
 	\end{eqnarray*}
 	for some positive constant $C_\lambda$.
 Thus, we have
 \begin{eqnarray*}
 \big|\mathcal P(j_k(f)) \big|^t \leq
 \sum_\lambda C_\lambda \sum_{
 				\alpha _1,...,\alpha _k \in \mathbb N^n} \left|\prod\limits_{j = 1}^k \prod\limits_{i = 1}^n \left({f_{i,\lambda}^{(j)} \over f_{i,\lambda}}\right)^{\alpha _{j,i}} \right|^t.
 \end{eqnarray*}
Therefore, we have
\begin{eqnarray}\label{rmp}
&~& \int_0^{2\pi } \big| \mathcal P( j_k( f ) )( re^{i\theta } ) \big|^t {d\theta\over2\pi}\\
&\leq& \sum_\lambda C_\lambda \sum_{
 				\alpha _1,...,\alpha _k \in \mathbb N^n} \int_0^{2\pi }  \left|\prod\limits_{j = 1}^k \prod\limits_{i = 1}^n \left({f_{i,\lambda}^{(j)} \over f_{i,\lambda}}\right)^{\alpha _{j,i}} (re^{i\theta})\right|^t  {d\theta\over2\pi},\nonumber
\end{eqnarray}
where  $\alpha _1,...,\alpha _k$ run over $\mathbb N^n$ with $|\alpha _1| + 2|\alpha _2| + ... + k|\alpha _k| = m.$
 We now fix $\alpha _1,...,\alpha _k\in\mathbb N^n$ with $|\alpha _1| + 2|\alpha _2| + \cdots  + k|\alpha _k| = m.$
Put $s_{j,i} = j\alpha_{j,i}/m,$ then $\sum_{j=1}^{k}\sum_{i=1}^{n}s_{j,i} = 1.$ By H\"older's inequality,
\begin{eqnarray*}
&~&\int_0^{2\pi }  \left|\prod\limits_{j = 1}^k \prod\limits_{i = 1}^n \left({f_{i,\lambda}^{(j)} \over f_{i,\lambda}}\right)^{\alpha _{j,i}} (re^{i\theta})\right|^t  {d\theta\over2\pi}\\
&\leq& \prod\limits_{j = 1}^k \prod\limits_{i = 1}^n \left(\int_0^{2\pi }\left| \left({f_{i,\lambda}^{(j)} \over f_{i,\lambda}}\right)^{\alpha _{j,i}}(re^{i\theta}) \right|^{t/ s_{j,i}}  {d\theta\over2\pi}\right)^{s_{j,i}}.
\end{eqnarray*}
Note that
$$
j\cdot \alpha _{j,i} \cdot (t/ s_{j,i}) = j\cdot \alpha _{j,i} \cdot (tm/j \alpha _{j,i}) = tm < 1,
$$
so  Lemma \ref{fujimoto} implies that
\begin{eqnarray*}
&~&\left(\int_0^{2\pi }\left| \left({f_{i,\lambda}^{(j)} \over f_{i,\lambda}}\right)^{\alpha _{j,i}}(re^{i\theta}) \right|^{t/ s_{j,i}}  {d\theta\over2\pi}\right)^{s_{j,i}} \\
&\leq& \left(C_1 \Big( 
{R \over r(R-r)} T_{f_{i, \lambda}}(R)\Big)^{p}\right)^{s_{j,i}}.
\end{eqnarray*}
Since $\phi_\lambda$ is a rational function and then we can apply Theorem A2.3.5 in \cite{book:minru} to get $T_{f_{i,\lambda}}(\rho) \leq C_2 T_{f,A}(\rho)$. Hence
\begin{eqnarray*}
\int_0^{2\pi }  \left|\prod\limits_{j = 1}^k \prod\limits_{i = 1}^n \left({f_{i,\lambda}^{(j)} \over f_{i,\lambda}}\right)^{\alpha _{j,i}} (re^{i\theta})\right|^t  {d\theta\over2\pi}
&\leq& \left(C_3 \Big( 
{R \over r(R-r)} T_{f, A}(R)\Big)^{p}\right)^{\sum_{j=1}^{k}\sum_{i=1}^{n} s_{j,i}}\\
&=& C_3\Big( 
{R \over r(R-r)} T_{f, A}(R)\Big)^{p}.
\end{eqnarray*}
Combining the above with (\ref{rmp}) completes our proof.

(ii) We use the same  notations in (i), but we further assume that $L^{-1}|_{U_\lambda}$ is trivial. For each $\lambda$, let $s_\lambda$ be a nowhere vanishing local frame on $U_\lambda$ to $L^{-1}.$ Then, on $U_\lambda,$ 
\begin{equation*}
\mathcal P\Big|_{U_\lambda} = \tilde{\mathcal P}_\lambda \otimes s_\lambda
\end{equation*}
for some $\tilde{\mathcal P}_\lambda \in H^0\left( U_\lambda,E_{k,m}^{GG}\Omega _X( \log D )\right).$ On $f^{-1}(U_\lambda)$, $\tilde{\mathcal P}(j_k(f))$ can be expressed as 
\begin{eqnarray*}
    \tilde{\mathcal P}(j_k(f))\Big|_{f^{-1}(U_\lambda)}=	\sum_{\alpha _1,...,\alpha _k \in \mathbb N^n} a_{\alpha _1,...,\alpha _k} \left( \prod\limits_{i = 1}^s \left( {f_{i,\lambda}^{(1)} \over f_{i,\lambda}}\right)^{\alpha _{1,i}}
 \prod\limits_{i = s + 1}^n (f_{i,\lambda}^{(1)})^{\alpha _{1,i}}  \right) \cdots\\ \left( \prod\limits_{i = 1}^s \left( {f_{i,\lambda}^{(k)} \over f_{i,\lambda}}\right)^{\alpha _{k,i}}\prod\limits_{i = s + 1}^n (f_{i,\lambda}^{(k)})^{\alpha _{k,i}}  \right), \nonumber
	 \end{eqnarray*}
	 where  $\alpha _1,...,\alpha _k$ run over $\mathbb N^n$ with $|\alpha _1| + 2|\alpha _2| + ... + k|\alpha _k| = m.$
Therefore \begin{eqnarray*}
&~&\chi_\lambda(f)\big\|\mathcal P(j_k(f))\big\|_{h^{-1}}^t \\
&=& \chi_\lambda(f) \big|\tilde{\mathcal P}(j_k(f))\big|^t\cdot \big\|s_\lambda(f)\big\|_{h^{-1}}^t\\
&=& \sum\limits_{\begin{subarray}{l} 
 				\alpha _1,...,\alpha _k 
 		\end{subarray}}\chi_\lambda(f) \left|a_{\alpha _1,...,\alpha _k}\left( \prod\limits_{j = 1}^k 
 	\left( \prod\limits_{i = 1}^n \left({f_{i,\lambda}^{(j)} \over f_{i,\lambda}}\right)^{\alpha _{j,i}} \prod\limits_{i = s + 1}^n (f_{i,\lambda})^{\alpha _{j,i}} \right)\right)\right|^t\cdot \big\|s_\lambda(f)\big\|_{h^{-1}}^t.
\end{eqnarray*}
Since $\chi_\lambda$ has compact support, 
$$
\left|\chi_\lambda(f)a_{\alpha _1,...,\alpha _k}\prod_{j=1}^k \left(\prod_{i=s+1}^n(f_{i,\lambda})^{\alpha_{j,i}}\right)\right|\cdot \big\|s_\lambda(f)\big\|_{h^{-1}}
$$
is bounded. Hence 
$$
\big\|\mathcal P(j_k(f))\big\|_{h^{-1}}^t  \leq \sum_\lambda C_\lambda \sum_{
 				\alpha _1,...,\alpha _k \in \mathbb N^n} \left|\prod\limits_{j = 1}^k \prod\limits_{i = 1}^n \left({f_{i,\lambda}^{(j)} \over f_{i,\lambda}}\right)^{\alpha _{j,i}} \right|^t,
$$
where  $\alpha _1,...,\alpha _k$ run over $\mathbb N^n$ with $|\alpha _1| + 2|\alpha _2| + ... + k|\alpha _k| = m.$
Repeat the argument as in (i) starting from (\ref{rmp}), we obtain the conclusion.
\end{proof}

In the rest of this section we  discuss some properties about the non-integrated defect. 

\begin{proposition}\label{defectcomparision}
	Let $X$ be a smooth projective variety. Let $D$ be an effective Cartier divisor on $X$ and $A$ be an ample divisor on $X$. Let $f : B(R_0)\subset {\Bbb C} \to X,$ $0<R_0\leq\infty,$ be a holomorphic  map. Fix a positive integer $\mu_0.$ If  $ \mathop {\lim }\limits_{r \to R_0}T_{f, A}(r)=\infty $, then we have 
	\begin{equation*}
	\delta_{\mu_0}^{f,A}(D) \leq 	\delta_{\mu_0}^{f,A, *}(D).
	\end{equation*}
	\end{proposition}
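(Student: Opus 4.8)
The plan is to integrate the current inequality of condition $(**)$ and read off the Nevanlinna functions. Fix any $\delta \geq 0$ for which $(**)$ holds, with associated bounded non-negative continuous function $h$ on $M = B(R_0)$; it suffices to prove $\delta \leq \delta_{\mu_0}^{f, A, *}(D)$, since taking the supremum over all admissible $\delta$ then yields the proposition. First I would rewrite $(**)$ as
\begin{equation*}
dd^c\log h^2 \;\geq\; \bigl[\min\{\nu^f(D),\mu_0\}\bigr] - f^*c_1([D]) + \delta\, f^*c_1([A]),
\end{equation*}
so that the left side minus the right side is a positive $(1,1)$-current on $B(R_0)$, hence a positive locally finite measure. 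In particular this inequality already forces $\log h^2 \in L^1_{\mathrm{loc}}$ and makes $\log h^2$ subharmonic up to a smooth correction, so its circular means $r \mapsto \int_0^{2\pi}\log h(re^{i\theta})^2\,\tfrac{d\theta}{2\pi}$ are well defined.

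Next I would choose $r_0 \in (0, R_0)$ with the circular mean of $\log h^2$ at radius $r_0$ finite, and apply the (linear, monotone, positivity-preserving) averaging operator $T \mapsto \int_{r_0}^{r}\bigl(\int_{B(t)} T\bigr)\,\tfrac{dt}{t}$ to both sides. By the first main theorem and Jensen's formula (see Ru's books \cite{book:minru}, \cite{book:minru2}), this operator sends $f^*c_1([D])$, $f^*c_1([A])$ and $\bigl[\min\{\nu^f(D),\mu_0\}\bigr]$, respectively, to $T_{f, D}(r)$, $T_{f, A}(r)$ and $N^{(\mu_0)}_f(r, D)$ up to $O(1)$, while it sends $dd^c\log h^2$ to $\int_0^{2\pi}\log h(re^{i\theta})^2\,\tfrac{d\theta}{2\pi} - \int_0^{2\pi}\log h(r_0e^{i\theta})^2\,\tfrac{d\theta}{2\pi}$, which is at most $\log\bigl(\sup_M h^2\bigr) - \int_0^{2\pi}\log h(r_0e^{i\theta})^2\,\tfrac{d\theta}{2\pi} = O(1)$ because $h$ is bounded. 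This gives $T_{f, D}(r) - N^{(\mu_0)}_f(r, D) \geq \delta\, T_{f, A}(r) - O(1)$ for $r_0 < r < R_0$; dividing by $T_{f, A}(r) > 0$ and using $\lim_{r \to R_0} T_{f, A}(r) = \infty$ to make $O(1)/T_{f, A}(r) \to 0$ yields
\begin{equation*}
\delta \;\leq\; \liminf_{r \to R_0}\frac{T_{f, D}(r) - N^{(\mu_0)}_f(r, D)}{T_{f, A}(r)} \;=\; \delta_{\mu_0}^{f, A, *}(D).
\end{equation*}
Taking the supremum over admissible $\delta$ completes the proof.

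The only step needing genuine care is the $dd^c\log h^2$ term, where one must justify the Jensen-type integration for an $h$ that is merely continuous and possibly vanishing. The key observation is that the current inequality itself rules out pathological behavior: it constrains $\log h^2$ to lie in $L^1_{\mathrm{loc}}$ and to be subharmonic up to a smooth term, so its circular means are well defined, monotone up to that smooth correction, and bounded above by $\log\lVert h\rVert_\infty^2$. Everything else is routine Nevanlinna bookkeeping, and — in contrast to the Second Main Theorem estimates elsewhere in the paper — no exceptional set appears, since the integrations used here are exact identities rather than asymptotic inequalities.
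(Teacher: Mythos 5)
Your proof is correct and follows essentially the same route as the paper: the paper packages the terms into a single subharmonic function $\nu=\log\frac{|\psi|}{\|s_D(f)\|_D}-\frac{\delta}{\ell}\log\|\phi_A(f)\|+\log h-\log|\varphi|$ and applies the Green--Jensen formula once, whereas you apply the averaging operator term by term, but the bookkeeping and the key use of the upper bound on $h$ to absorb the $dd^c\log h^2$ contribution into $O(1)$ are identical. Your extra care about the local integrability and circular means of $\log h^2$ is legitimate and is handled implicitly in the paper by working with the combined subharmonic function $\nu$.
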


\begin{proof} 
	Take a positive integer $\ell$ such that $\ell A$ is very ample. Let $\phi_A : X \to \mathbb P ^N(\Bbb C)$ be the canonical embedding associated to the linear system $|\ell A|$. Then we have $dd^c\log \big\|\phi_A(f)\big\|^2=\ell f^*c_1([A])$. Let $\psi$ be a holomorphic function on $B(R_0)$ such that $\nu_\psi = \nu ^f (D) $. Let $s_D$ be the canonical section of the line bundle $[D]$ associated to $D$. Applying the Poincar\'e-Lelong formula  (see Theorem A2.2.2 in \cite{book:minru})  to $\big\|s_D(f)\big\|_D$, where $\big\|\cdot\big\|_D$ is an Hermitian metric on $[D]$, we get $dd^c\log \big\|s_D(f)\big\|_D^2=dd^c\log \big|\psi\big|^2 - f^*c_1([D]).$ Thus
	\begin{equation}\label{Chern}
	f^*c_1([D]) = dd^c\log {\big|\psi\big|^2 \over \big\|s_D(f) \big\|_D^2}.\end{equation}
	Take arbitrary $\delta \geq 0$ satisfying the condition (**) in the definition of $\delta_{\mu_0}^{f,A}(D)$:
$$
 f^*c_1([D])-\delta f^*c_1([A]) + dd^c\log h^2 \geq \left[\min\{\nu^f(D), \mu_0\}\right],
$$
where    $h$ is a bounded non-negative continuous function on $B(R_0).$ Let $\varphi $ be a holomorphic function on $B(R_0)$ such that $\nu_\varphi =\text {min}\{\nu ^f (D),\mu_0\}$.
 Then
		\begin{equation*}
	\nu:=\log  {\big|\psi\big| \over \big\|s_D(f) \big\|_D} - {\delta \over \ell}\log \big\|\phi_A(f)\big\| +\log h -\log \big|\varphi\big|
	\end{equation*}	
	is subharmonic. Using Green-Jensen formula (see Theorem A2.2.3 in \cite{book:minru}), we have  
\begin{eqnarray*}
0 &\leq& \int_0^{2\pi} \nu(re^{i\theta}) {d\theta\over 2\pi} - \int_0^{2\pi} \nu(r_0e^{i\theta}) {d\theta\over 2\pi} \\ 
&=& 	\int_0^{2\pi} \log {\big|\psi(r e^{i\theta})\big| \over \big\|s_D(f)(re^{i\theta})\big\|_D}{d\theta\over2\pi} - {\delta \over \ell}	\int_0^{2\pi} \log  \big\|\phi_A(f)(re^{i\theta})\big\| {d\theta\over2\pi}\\
&~& + \int_0^{2\pi} \log h(re^{i\theta}) {d\theta\over2\pi} - \int_0^{2\pi} \log \big|\varphi(re^{i\theta})\big| {d\theta\over2\pi} + O(1)\\
&\leq& \int_{{r_0}}^r {dt \over t} \int_{B(t)} dd^c \log {\big|\psi\big|^2 \over \big\|s_D(f)\big\|_D^2} - {\delta \over \ell} \int_{{r_0}}^r {dt \over t} \int_{B(t)} dd^c \log \big\|\phi_A(f)\big\|^2\\
&~& - \int_0^{2\pi} \log \big|\varphi(re^{i\theta})\big| {d\theta\over2\pi} + O(1)\\
&=&  \int_{{r_0}}^r {dt \over t} \int_{B(t)} dd^cf^*c_1([D])  - \delta \int_{{r_0}}^r {dt \over t} \int_{B(t)} dd^cf^*c_1([A]) \\
&~& - N^{(\mu_0)}_f(r,D) + O(1)\\
&=& T_{f, D}(r) - \delta T_{f, A}(r)- N^{(\mu_0)}_f(r, D) + O(1).
\end{eqnarray*}
	This implies 
		\begin{equation*}
\delta \leq  {T_{f, D}(r) - N^{(\mu_0)}_f(r, D) \over T_{f, A}(r)} + {K \over T_{f, A}(r)}
	\end{equation*}	
	for some constant $K$.
	Since $\delta $ is arbitrary and $T_{f, A}(r) \to \infty$ as $r \to R_0$, we obtain our conclusion.
\end{proof}
We also  give a proof of (\ref{mul}) here. 
\begin{proof}
Let $D$ and $A$ be ample divisors on $X.$ Suppose that $\nu^f(D)(p)\ge \mu$ for every point $p\in f^{-1}(D).$ Let $\delta = (1-{\mu_0\over \mu}){1\over\gamma_{D,-A}}$ and 
$h = \big\|s_D(f)\big\|_D^{\mu_0\over \mu}.$ Note that, by the definition of $\gamma_{D,-A},$
$$
\gamma_{D,-A}c_1([D]) \geq c_1([A]).
$$
Hence, by also using  (\ref{Chern}), 
\begin{eqnarray*}
&~&f^*c_1([D])-\delta f^*c_1([A]) + dd^c\log h^2\\
&\geq& f^*c_1([D]) - \left(1-{\mu_0\over \mu}\right) {1\over\gamma_{D,-A}} \cdot  \gamma_{D,-A}f^*c_1([D]) + dd^c\log h^2\\
&=& \left({\mu_0\over \mu}\right)   f^*c_1([D]) +dd^c\log h^2
 = dd^c \log {\big|\psi\big|^{2\mu_0\over \mu}\over \big\|s_D(f)\big\|_D^{2\mu_0\over \mu}} + dd^c \log \big\|s_D(f)\big\|_D^{2\mu_0\over \mu}\\
&=& dd^c \log \big|\psi\big|^{2\mu_0\over \mu} 
= {\mu_0\over \mu} \nu^f(D)\geq \mu_0.
\end{eqnarray*}
\end{proof}

\section{Proofs of Theorem  \ref{smtXie}, Theorem  \ref{smtyamanoi} and Theorem \ref{non-integrated defect}}
\noindent{\it Proof of Theorem  \ref{smtXie}}.
\begin{proof}
  Let $(V, \phi)$ be a small enough local chart of $X$ such that $\phi: X\rightarrow {\Bbb C}^n$ is a rational map 
 and $D|_V=\{z_1 \cdots  z_s=0 \} $ for some $0 \leq s \leq n$. If $s=0$, we mean $D\cap V = \emptyset$, where $(z_{1},...,z_{n})$ is the standard  coordinate on ${\Bbb C}^n$. Put 
	$f_{j}:=z_{j}(\phi(f))$ for $j=1, \dots, n$. 
	From (\ref{locrep}), it is easy to see that the support of $\nu^\infty_{\mathcal P(j_k(f))}$ on $f^{-1}(V)$ is a subset of the zero set of $f_1,...,f_s$.
	Since, for $1 \leq j \leq k$, the pole order of $\Big({f_i^{(j)} \over f_i}\Big)^{\alpha _{j,i}}$ at any point $z$ in $f^{-1}(V)$ is at most $j \cdot \alpha_{j,i} \cdot \text {min}\{\text {ord}_zf_i,1\}$. Hence
		\begin{eqnarray}\label{pole}
	\nu^\infty_{\mathcal P(j_k(f))}(z) &\leq& \sum_{1 \leq i \leq s, 
	1 \leq j \leq k} j \cdot \alpha_{j,i}\cdot \text {min}\{\text {ord}_zf_i,1\} \\
&\leq& (|\alpha_1|+2|\alpha_2|+\cdots +k|\alpha_k|) \cdot \text{min} \{ \nu ^f(D)(z), 1\}\nonumber \\
&=& m\cdot\text {min} \{ \nu ^f(D)(z), 1\}. \nonumber\end{eqnarray}
On the other hand, by applying (ii) in Lemma  \ref{jetdefest}, we have, with $0<mt<p<1$,
\begin{equation}\label{p1}
\int_0^{2\pi } \big\|\mathcal P(j_k(f))( re^{i\theta } ) \big\|_{h^{-1}}^t {d\theta \over 2\pi} 
\leq K \Big( {R\over r(R-r)} T_{f, A}(R)\Big)^p
\end{equation}
for $0<r_0<r<R<R_0$, 
where $h$ is an Hermitian metric on $A^{\tilde m}$.
By the Poincar\'e-Lelong formula (see Theorem A2.2.2 in \cite{book:minru}), we have, by also using (\ref{pole}), 
	 $$
	\text {dd}^c \log\big\| \mathcal P( j_k( f ) ) \big\|_{h^{-1}}^2 
\geq \tilde m f^*c_1(A)-\nu^\infty_{\mathcal P(j_k(f))} \ge  \tilde m f^*c_1(A)- m\cdot \text {min} \{ \nu ^f(D), 1\}$$
	in the sense of currents. Hence,  by applying Green-Jensen's formula  (see Theorem A2.2.3 in \cite{book:minru}), we get 
	\begin{equation*}
		\tilde mT_{f, A}(r) \leq \int_0^{2\pi } \log\big\| \mathcal P(j_k(f)) \big\|_{h^{-1}} {d\theta \over 2\pi} + mN^{(1)}_f(r, D)+ O(1).
	\end{equation*}
	For any $\epsilon>0$, take $t, p$ with $0<mt<p<1$ and $p< (1+\epsilon) mt$. Then, for $r_0<r<R<R_0$, 
	by the concavity of logarithmic function and (\ref{p1}), we have 
		\begin{eqnarray}\label{log}
		~&~&\int_0^{2\pi } \log\big\| \mathcal P(j_k(f))( re^{i\theta } ) \big\|_{h^{-1}} {d\theta \over 2\pi}\\ 
&\leq& {1\over t} 
 \log^+ \int_0^{2\pi } \big\| \mathcal P(j_k(f))( re^{i\theta } ) \big\|_{h^{-1}}^t {d\theta \over 2\pi}\nonumber \\ 
&\leq&{p\over t}  \left(\log^+ T_{f, A}(R) + \log^+{R \over r(R-r)}+O(1)\right) \nonumber \\
			&\leq&m(1+\epsilon)\left(\log^+ T_{f, A}(R) + \log^+{R\over r(R-r)}\right)+O(1). \nonumber
	\end{eqnarray}
We now use  Lemma 4.1.7 in \cite{book:minru} to  estimate the above term. Denote the above term by $S(r, f, A)$.  In the case when $R_0=\infty$, 
take $R=  r + {1 \over \log^{1+\epsilon} T_{f,A}(r)}$.
 Lemma 4.1.7  in \cite{book:minru} implies that 
$$\log T_{f, A}(R) \leq  \log T_{f, A} (r) +1  $$ and
$$
\log^+{R\over r(R-r)}\leq (1+\epsilon)\log^+\log T_{f,A}(r) + O(1)
$$
  for every $r \in [0,\infty)$ excluding a set $E$ with $\int_E dt < \infty$.
Thus 
\begin{equation*}
S(r,f,A) \leq  m\Big((1+\epsilon)\log^+T_{f, A}(r)+(1+\epsilon)^2\log^+\log^+T_{f, A}(r) \Big)+O(1)
\end{equation*}
for every $r \in [0,\infty)$ excluding a set $E$ with $\int_E dt < \infty$.
 
 In the case when $R_0<\infty,$ by  Lemma 4.1.7 in \cite{book:minru}, we get
$$T_{f,A}\left(r+{R_0-r\over eT_{f,A}(r)}\right)\leq eT_{f,A}(r)$$
for every $r \in  [0, R_0)$ excluding a set $E$ with $\int_E{dt\over R_0-t}<\infty$. We take $R = r+{R_0-r\over eT_{f,A}(r)}$. Then
\begin{equation*}
S(r,f,A) \leq K \Big(\log{1 \over R_0-r} + \log^+T_{f, A}(r)\Big)
	\end{equation*}
for every $r \in [0,R_0)$ excluding a set $E$ with $\int_E {dt \over R_0-t} < \infty$.
 \end{proof}

We obtain the following defect relation (in terms of  Nevanlinna's defect)  as a consequence of  Theorem  \ref{smtXie}.
\begin{corollary} Under the same assumptions in Theorem \ref{smtXie}.
If we further assume  either $R_0=\infty$ or $R_0<\infty$ with 
 $$\limsup_{r\rightarrow R_0} {T_{f, A}(r)\over {\log{1\over R_0-r}}}=\infty,$$
then
\begin{equation}\label{Xiedefect} \delta_{1}^{f, A, *}(D) \leq \gamma_{A, -D}- {\tilde{m}\over m},\end{equation}
\end{corollary}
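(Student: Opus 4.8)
The plan is to feed the Second Main Theorem of Theorem~\ref{smtXie} into the standard Nevanlinna-theoretic machinery for extracting a defect relation. Recall that the $\limsup$/growth hypothesis is exactly what is needed to absorb the error term $S(r,f,A)$: in the case $R_0=\infty$ the term $S(r,f,A)=m\big((1+\epsilon)\log^+T_{f,A}(r)+(1+\epsilon)^2\log^+\log^+T_{f,A}(r)\big)+O(1)$ is $o(T_{f,A}(r))$ off a set of finite measure, and in the case $R_0<\infty$ the term $S(r,f,A)=K\big(\log\tfrac{1}{R_0-r}+\log^+T_{f,A}(r)\big)$ is likewise $o(T_{f,A}(r))$ along a sequence $r\to R_0$, because the hypothesis $\limsup_{r\to R_0}T_{f,A}(r)/\log\tfrac1{R_0-r}=\infty$ forces the $\log\tfrac1{R_0-r}$ piece to be negligible while $\log^+T_{f,A}(r)=o(T_{f,A}(r))$ automatically. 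In either case one passes to a sequence (or to all $r$ outside the exceptional set) along which $S(r,f,A)/T_{f,A}(r)\to 0$.

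First I would rewrite the conclusion of Theorem~\ref{smtXie} as
\begin{equation*}
\tilde m\, T_{f,A}(r)\le m\, N^{(1)}_f(r,D)+S(r,f,A).
\end{equation*}
Next, write $D\in|eA'|$ for a suitable comparison, or rather directly compare the height $T_{f,D}(r)$ to $T_{f,A}(r)$ using $\gamma_{A,-D}$: by the definition in~(\ref{gammacon}), $\gamma_{A,-D}c_1([A])-c_1([D])>0$, so $T_{f,D}(r)\le \gamma_{A,-D}T_{f,A}(r)+O(1)$ (the First Main Theorem / functoriality of heights, up to a bounded term since the difference of the two line bundles is a positive class, hence its height is bounded below; one may need positivity to go the right direction — see the remark on the obstacle below). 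Then $N^{(1)}_f(r,D)\le T_{f,D}(r)$ (First Main Theorem) need not be used; instead, I would divide the displayed inequality by $T_{f,A}(r)$:
\begin{equation*}
\tilde m\le m\cdot\frac{N^{(1)}_f(r,D)}{T_{f,A}(r)}+\frac{S(r,f,A)}{T_{f,A}(r)}.
\end{equation*}
Now substitute $N^{(1)}_f(r,D)=T_{f,D}(r)-\big(T_{f,D}(r)-N^{(1)}_f(r,D)\big)$ and $T_{f,D}(r)\le\gamma_{A,-D}T_{f,A}(r)+O(1)$, take $\liminf_{r\to R_0}$ along the good sequence, and use $S(r,f,A)/T_{f,A}(r)\to0$ together with $T_{f,A}(r)\to\infty$ to obtain
\begin{equation*}
\tilde m\le m\gamma_{A,-D}-m\,\liminf_{r\to R_0}\frac{T_{f,D}(r)-N^{(1)}_f(r,D)}{T_{f,A}(r)}=m\gamma_{A,-D}-m\,\delta^{f,A,*}_1(D).
\end{equation*}
Rearranging gives $\delta^{f,A,*}_1(D)\le\gamma_{A,-D}-\tilde m/m$, which is~(\ref{Xiedefect}).

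The main obstacle I anticipate is purely bookkeeping with the exceptional sets and the height-comparison direction. One must be careful that the inequality $T_{f,D}(r)\le\gamma_{A,-D}T_{f,A}(r)+O(1)$ holds in the stated direction: this uses that $\gamma_{A,-D}A-D$ is an ample (or at least nef, or big-and-semipositive) class, so its pulled-back first Chern form contributes a nonnegative term to the characteristic function up to $O(1)$; the definition~(\ref{gammacon}) is tailored precisely so that $tc_1([A])+c_1([-D])=tc_1([A])-c_1([D])>0$ for $t>\gamma_{A,-D}$, and one takes $t\downarrow\gamma_{A,-D}$. The other delicate point is that in the $R_0<\infty$ case the exceptional set $E$ with $\int_E \tfrac{dt}{R_0-t}<\infty$ does not cover a full neighborhood of $R_0$, so there is a sequence $r_\nu\to R_0$ with $r_\nu\notin E$; along this sequence the hypothesis on $\limsup$ lets us further select so that $T_{f,A}(r_\nu)/\log\tfrac1{R_0-r_\nu}\to\infty$, making $S(r_\nu,f,A)=o(T_{f,A}(r_\nu))$. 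Once these two points are handled the rest is routine division and passage to the limit.
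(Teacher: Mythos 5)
Your argument is correct and is exactly the standard derivation the paper intends (the corollary is stated without proof as an immediate consequence of Theorem~\ref{smtXie}): absorb $S(r,f,A)$ using the growth hypothesis along a sequence outside the exceptional set, compare $T_{f,D}(r)\le t\,T_{f,A}(r)+O(1)$ for $t>\gamma_{A,-D}$ and let $t\downarrow\gamma_{A,-D}$, then divide and pass to the $\liminf$ defining $\delta_1^{f,A,*}(D)$. The two delicate points you flag (the direction of the height comparison via the definition of $\gamma_{A,-D}$, and choosing the sequence $r_\nu\to R_0$ outside $E$ while keeping $T_{f,A}(r_\nu)/\log\frac{1}{R_0-r_\nu}\to\infty$, which works since $T_{f,A}$ is increasing and $\int_E\frac{dt}{R_0-t}<\infty$) are handled correctly.
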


\noindent{\it Proof of Theorem \ref{smtyamanoi}}
	\begin{proof}
	We take a local chart of an open affine subset $(U, (z_1, \cdots, z_n))$ such that $D|_U= \{z_1  \cdots z_s =0\}$ for some $0 \leq s \leq n$. For $s=0$, we mean $D \cap U = \emptyset$. We may assume $L\big|_U$ and $[D]\big|_U$ are trivial. Then $J_kD_1+\cdots +J_kD_q\big|_{\pi_k^{-1}(U)}$ is defined by the ideal 
$$
(z_1, dz_1,\dots, d^kz_1) \cdots(z_s, dz_s,\dots, d^kz_s).
$$
Since $J_kD_1+\cdots+J_kD_q \subset \pi_k^*(w=0)$, we have 
		\begin{equation*}
w|_U \in (z_1, dz_1,\dots, d^kz_1)\cdots(z_s, dz_s,\dots, d^kz_s).
	\end{equation*}
	Hence, by letting $I = \{0,\dots, k\}$, we have
		\begin{equation*}
	w|_U = \sum_{    \alpha \in I^s, 
 	\alpha = (\alpha _1,...,\alpha _s)} w_{\alpha}d^{\alpha_1}z_1\cdots d^{\alpha_s}z_s
	\end{equation*}
	for some $w_{\alpha}$ in $H^0(U, \mathcal O _ {J_kD})$.
	Let $s_D$ be the canonical section associated to $D$. Since $[D]\big|_U$ is trivial and $D|_U= \{z_1  \cdots z_s =0\}$,  $s_D\big|_U=z_1  \cdots z_s \cdot a$ for some nowhere zero local holomorphic function $a$ on $U$.
	Hence 
$$
{w \over s_D}\Bigg|_U = \sum_{    \alpha \in I^s, 
 	\alpha = (\alpha _1,...,\alpha _s)}
 \tilde{w}_{\alpha}\Big({d^{\alpha_1}z_1\over z_1}\Big)\cdots \Big({d^{\alpha_s}z_s\over z_s}\Big),
$$ 
where $\tilde{w}_{\alpha} = w_\alpha / a.$
	Thus  ${w \over s_D} \in H^0( X,E_{k,m}^{GG}\Omega _X( \log D ) \otimes (L \otimes [D] )^{-1} )$. Note that $w \over s_D$ has at most logarithmic pole singularities along $D$ of order at most 1.  
	
	Denote by $\| \cdot \|_{L^{-1}}$ and $\| \cdot \|_D$ the  Hermitian metrics on $L^{-1}$ and $[D]$ respectively. Then ${\| \cdot \|_{L^{-1}} \over \| \cdot \|_D~~~{~~}~~}$ is an Hermitian metric on $(L \otimes [D] )^{-1} $.  	By Lemma \ref{jetdefest} (ii), we get, with $0<mt<p<1$, 
	\begin{equation}\label{M1}
\int_0^{2\pi }  {\big\| w(j_k(f))(re^{i\theta}) \big\|_{L^{-1}}^{t} \over \big\| s_D(f)(re^{i\theta}) \big\|_D^{t}} {d\theta \over 2\pi}  \leq K   \Big( {R\over r(R-r)} T_{f, A}(R)\Big)^p.
\end{equation}
On the other hand, 
	  by applying the Poincar\'e-Lelong formula  (see Theorem A2.2.2 in \cite{book:minru}),  we have
		\begin{eqnarray*}
		dd^c \log {\big\| w(j_k(f))   \big\|_{L^{-1}}^2 \over \big\| s_D(f) \big\|_D^2 }&=& dd^c \log \big\| w(j_k(f))\big\|_{L^{-1}}^2 -dd^c \log \big\| s_D(f) \big\|_D^2 \\ 
		&=& \nu^{j_k(f)}\big((w=0)\big) - f^*c_1(L^{-1}) - \nu ^f(D) + f^*c_1([D]) \\
		&=& \nu^{j_k(f)}\big((w=0)\big) - \nu ^f(D) + f^*c_1(L \otimes [D]), 
	\end{eqnarray*}
	where $j_k(f): B(R_0)\rightarrow J_kX$ is the $k$-th lifting of $f$.
Hence, by using the Green-Jensen formula, we get
			\begin{equation}\label{Q1}
N_{j_k(f)}\big(r,(w=0)\big) - N_f(r,D) + T_{f,L \otimes [D]}(r) \leq 	\int_0^{2\pi } \log {\big\| w(j_k(f))(re^{i\theta}) \big\|_{L^{-1}} \over \big\| s_D(f)(re^{i\theta}) \big\|_D } {d\theta \over 2\pi} + O(1).
	\end{equation}
	Following the same argument as in the proof of Theorem \ref{smtXie} by using  (\ref{M1}), we can get
	\begin{equation}\label{Q2}
	T_{f, L \otimes [D]}(r) \leq N_f(r, D) - N_{j_k(f)}(r, (w=0)) + S(r,f,A),
\end{equation}where $S(r, f, A)$ is evaluated in Theorem \ref{smtXie}.

By the assumption that $J_kD_1+\cdots+J_kD_q\subset \pi_k^*(w=0)$, we have 
	\begin{equation}\label{Q3} N_f(r,D) - N_{j_k(f)}\big(r,(w=0)\big) 
		\leq \sum\limits_{i = 1}^q \Big[ N_f(r,D_i) - N_{j_k(f)}(r, J_kD_i)\Big].\end{equation}
		Here we regard $J_k D_i$ as a subscheme of $J_kX$, and the counting function for the subscheme 
		is defined according to Yamanoi \cite{Yamanoi2}. Namely, 
		$$N_{j_k(f)}(r, J_kD_i) =\int_{r_0}^r \left (\sum_{z\in B(t)} \ord_z  (j_k^*(f) J_k(D_i))\right){dt\over t},$$
for some fixed $r_0$ with $0<r_0<r<R_0.$
				
		We claim that, for $1\leq i\leq q$ and  $z\in B(R_0)$ such that $f(z)\in \text{supp} D$, 
	\begin{equation}\label{multi}
	\ord_z (j^*_k(f)J_k(D_i))= \ord_z f^*D_i-\min\{k, \ord_z f^*D_i\}.
	\end{equation}
	Indeed, let  $V\subset X$ be a Zariski open subset such that
$f(z)\in V$.    Write $D_i|_V=\{g_i=0\}$, where $g_i\in H^0(V, {\mathcal O}_X)$ is the local defining function of $D_i$. Then
$$\ord_z f^*D_i=\ord_z(g_i\circ f)$$
and $$\ord_z (j^*_k(f) J_k(D_i))=\min\{\ord_z g_i\circ f, \dots, \ord_z  (j^*_k(f)(d^k g_i))\},$$
where, by the definition,  $$
j^*_k(f)(d^l g_i)={d^l\over dz^l}(g_i\circ f)$$
for $l=1, \dots, k$. Hence
\begin{eqnarray*}\min\{\ord_z g_i\circ f, \dots, \ord_z  (j^*_k(f)(d^k g_i))\}&=&\max\{\ord_z (g_i\circ f)-k, 0\}\\
&=& \ord_z g_i\circ f -\min\{k, \ord_z g_i\circ f\}.
\end{eqnarray*}
Therefore
$$\ord_z (j^*_k(f) J_k(D_i)) = \ord_z g_i\circ f -\min\{k, \ord_z g_i\circ f\} = \ord_z f^*D_i-\min\{k, \ord_z f^*D_i\}.$$
This proves our claim.  The claim implies that 
\begin{equation}\label{Q4}
N_f(r,D_i) - N_{j_k(f)}(r, J_kD_i)\leq N^{(k)}_f(r,D_i) .\end{equation}
Combing (\ref{Q1}), (\ref{Q2}), (\ref{Q3}) and (\ref{Q4}) gives 
		\begin{equation*}
	T_{f,L \otimes [D]}(r) \leq \sum\limits_{i = 1}^q N^{(k)}_f(r,D_i) + S(r,f,A).
	\end{equation*}
\end{proof}
Theorem \ref{smtyamanoi} gives the following defect relation, in terms of the Nevanlinna's defect.
\begin{corollary}\label{Yamanoidef} Under the same assumptions in Theorem  \ref{smtyamanoi}.
If we further assume  either $R_0=\infty$ or $R_0<\infty$ with 
 $$\limsup_{r\rightarrow R_0} {T_{f, A}(r)\over {\log{1\over R_0-r}}}=\infty,$$
then
\begin{equation}\label{Ydefect}\sum_{j=1}^q \delta_{k}^{f, A, *}(D_j) \leq \gamma_{A, L}. \end{equation}
\end{corollary}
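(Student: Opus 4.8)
The plan is to deduce the defect relation from the Second Main Theorem of Theorem~\ref{smtyamanoi} in the standard way. First I would rewrite the left-hand side of that theorem using additivity of the characteristic function: since $[D]=[D_1]\otimes\cdots\otimes[D_q]$, one has $T_{f,L\otimes[D]}(r)=\sum_{j=1}^q T_{f,D_j}(r)+T_{f,L}(r)+O(1)$. To bound $T_{f,L}(r)$ from below, fix any $t>\gamma_{A,L}$; by the definition of $\gamma_{A,L}$ in (\ref{gammacon}) the form $t\,c_1([A])+c_1([L])$ is positive on $X$, so its $f$-pullback is a nonnegative $(1,1)$-form on $B(R_0)$, whence $t\,T_{f,A}(r)+T_{f,L}(r)\ge -O(1)$, i.e.\ $T_{f,L}(r)\ge -t\,T_{f,A}(r)-O(1)$. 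Combining these two facts with Theorem~\ref{smtyamanoi} yields
\[
\sum_{j=1}^q\bigl(T_{f,D_j}(r)-N^{(k)}_f(r,D_j)\bigr)\ \le\ t\,T_{f,A}(r)+S(r,f,A)+O(1)
\]
for all $r$ outside the exceptional set $E$ of Theorem~\ref{smtyamanoi}.

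Next I would divide by $T_{f,A}(r)$ and take $\liminf$ as $r\to R_0$. Using superadditivity of $\liminf$ (so that $\sum_j\delta^{f,A,*}_k(D_j)\le\liminf_{r\to R_0}\sum_j\frac{T_{f,D_j}(r)-N^{(k)}_f(r,D_j)}{T_{f,A}(r)}$) together with the fact that restricting the limit to $r\notin E$, whose complement still accumulates at $R_0$, only increases the $\liminf$, the displayed inequality gives
\[
\sum_{j=1}^q\delta^{f,A,*}_k(D_j)\ \le\ t+\liminf_{r\to R_0,\,r\notin E}\frac{S(r,f,A)+O(1)}{T_{f,A}(r)}.
\]
It then remains to check that the last $\liminf$ is $\le 0$. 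This uses $T_{f,A}(r)\to\infty$ as $r\to R_0$, which holds in both regimes: for $R_0=\infty$ because $f$ is non-constant, and for $R_0<\infty$ because $T_{f,A}$ is nondecreasing and, by the $\limsup$ hypothesis, unbounded. When $R_0=\infty$, the estimate $S(r,f,A)=O(\log^+T_{f,A}(r)+\log^+\log^+T_{f,A}(r))$ forces $S(r,f,A)/T_{f,A}(r)\to 0$ at once. When $R_0<\infty$, one needs $\liminf_{r\to R_0,\,r\notin E}\bigl(\log\tfrac{1}{R_0-r}\bigr)/T_{f,A}(r)=0$, which I would derive from the hypothesis $\limsup_{r\to R_0}T_{f,A}(r)/\log\tfrac{1}{R_0-r}=\infty$ by choosing a sequence $r_n\to R_0$ realizing it and perturbing each $r_n$ slightly into the complement of $E$ — possible since $\int_E\frac{dt}{R_0-t}<\infty$ — noting that such a perturbation changes $\log\tfrac{1}{R_0-r_n}$ by $O(1)$ and does not decrease $T_{f,A}$. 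This yields $\sum_{j=1}^q\delta^{f,A,*}_k(D_j)\le t$ for every $t>\gamma_{A,L}$, and letting $t\downarrow\gamma_{A,L}$ completes the proof.

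The only genuinely delicate point is the exceptional-set bookkeeping when $R_0<\infty$: the $\limsup$ hypothesis has to be transferred to a sequence lying outside $E$, which is where the finiteness $\int_E\frac{dt}{R_0-t}<\infty$ and the monotonicity of $T_{f,A}$ are used. Everything else — additivity of characteristic functions, the positivity argument built into the definition of $\gamma_{A,L}$, and superadditivity of $\liminf$ — is routine, being precisely the standard passage from a Second Main Theorem to a defect relation.
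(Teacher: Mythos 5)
Your proof is correct and is precisely the standard passage from the Second Main Theorem to a defect relation that the paper intends (the corollary is stated without an explicit proof as an immediate consequence of Theorem \ref{smtyamanoi}): additivity of characteristic functions, the lower bound $T_{f,L}(r)\ge -t\,T_{f,A}(r)-O(1)$ for $t>\gamma_{A,L}$ coming from the definition of $\gamma_{A,L}$, superadditivity of $\liminf$, and the perturbation of a $\limsup$-realizing sequence into the complement of the exceptional set using $\int_E\frac{dt}{R_0-t}<\infty$ and the monotonicity of $T_{f,A}$. No gaps.
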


\noindent{\it Proof of Theorem \ref{yamanoinonint}} \begin{proof} 
	Since the universal covering of $M$ is $B(R_0) \subset \mathbb C$, by lifting $f$ to $B(R_0),$ we may assume that $M = B(R_0)$. In the case for $R_0=\infty$ and the case for $R_0<\infty$ but
	\begin{equation*}
\limsup_{r\rightarrow R_{0}}  {T_{f,A}(r) \over  \log {1 \over R_0 -r}} = \infty, 
	\end{equation*}
	since from Proposition \ref{defectcomparision}, 
	$\delta_{\mu_0}^{f, A }(D_j) \leq 	\delta_{\mu_0}^{f, A, * }(D_j)$, we get, from (\ref{Ydefect}),
		$$\sum_{j=1}^q   \delta_{k}^{f, A}(D_j)
	 \leq  \gamma_{A, L},$$
			which proves  this case.
So we may assume 
	\begin{equation}\label{ass1}
		\limsup_{r\rightarrow R_{0}}  {T_{f,A}(r) \over  \log {1 \over R_0 -r}} < \infty. 
	\end{equation}
	We also assume $R_0 =1$. 
		We prove by contradiction for this case. So we suppose that 
	$$\sum_{j=1}^q   \delta_{k}^{f, A}(D_j)
	 >  \gamma_{A, L} +2\rho m.$$
			By the definition of non-integrated defect, for each $1 \leq j \leq q$, there exists a real number $\delta_j \geq 0$ and a bounded continuous real-valued function $h_j$ on $B(1)$ such that 
		\begin{equation}\label{r1}
		f^*c_1([D_j])- \delta_j f^*c_1([A]) +dd^c\log h_j^2 \ge  \left[\min \{\nu^f(D_j), k\}\right],
	\end{equation}
	and 
	\begin{equation}\label{key}
		\sum\limits_{j = 1}^q  \delta_j > \gamma_{A, L} + 2\rho m.
	\end{equation}
We define, for an arbitrary effective  divisor $D$ on $X$, 
 \begin{equation}\label{a1}
 \big\|f\big\|_{D} := \frac{\big|\psi\big|}{\big\|s_{D}(f)\big\|},
 \end{equation}  
 where $s_D$ is the canonical section of $[D]$ and $\psi$ is a holomorphic function on $B(1)$ such that $\nu_\psi = \nu^f(D).$
Then, from (\ref{Chern}), we have  $
f^*c_1([D_j]) = dd^c \log  \big\|f\big\|_{D_j}^2$, and $f^*c_1([A]) = dd^c \log  \big\|f\big\|_{A}^2$. 
	Hence we can rewrite (\ref{r1}) as
\begin{equation}\label{Dj}
dd^c\log \big\|f\big\|_{D_j}^2 - \delta_j dd^c\log\big\|f\big\|_A^2 + dd^c\log h_j^2 \geq dd^c\log\big|\varphi_j\big|^2,
\end{equation}
where  $\varphi_j$ is a holomorphic function on $B(1)$ such that  $\nu_{\varphi_j} = \min\{\nu^f(D_j), k\}.$ 
Thus,  for $1\leq j \leq q,$ there exists a subharmonic function $u_j\not\equiv 0$ on $B(1)$ such that
\begin{equation}\label{uj}
e^{u_j} \leq {\big\|f\big\|_{D_j}\over \big\|f\big\|^{\delta_j}_A}
\end{equation}
and $u_j - \log \varphi_j$ is subharmonic.
Note that if we let
$$
\big\|f\big\|_D = \big\|f\big\|_{D_1}\cdots \big\|f\big\|_{D_q},
$$	
then $f^*c_1([D]) = dd^c\log \big\|f\big\|_D^2$ and
\begin{equation}\label{u}
  e^{\sum_{j=1}^{q}u_j} \leq {\big\|f\big\|_D\over \big\|f\big\|^{\sum_{j=1}^{q}\delta_j}_A}.
\end{equation}	
	From the proof of Theorem \ref{smtyamanoi}, we know that
	$${\mathcal P \over s_D} \in H^0\left( X,E_{k,m}^{GG}\Omega _X( \log D )\otimes(L\otimes [D])^{-1} \right).$$	
Hence Lemma \ref{jetdefest} (ii) implies that, for $0<tm<p<1$, 
$$
\int_0^{2\pi } \left\|{\mathcal P(j_k(f))(re^{i \theta })\over s_D(f(re^{i\theta}))}\right\|_{h^{-1}}^t{d\theta \over 2\pi}\leq K \Big( {R\over r(R-r)} T_{f, A}(R)\Big)^p,$$
for $0<r_0<r<R<1$, 
where $h$ is an Hermitian metric on $L\otimes[D].$
According to Lemma 4.1.7 in \cite{book:minru2}, choose $R := r + {1-r\over eT_{f,A}(r)},$
then $
T_{f,A}(R) \leq 2T_{f,A}(r)$
 outside a set $E$ with $\int_E{1\over 1-r}dr<\infty.$ Thus, by also using (\ref{ass1}), we get, for $0<tm<p<1$, 
\begin{eqnarray*}
\int_0^{2\pi } \log \left\|{\mathcal P(j_k(f))(re^{i \theta })\over s_D(f(re^{i\theta}))}\right\|_{h^{-1}}^t{d\theta \over 2\pi}
&\leq& K_1 {1\over (1-r)^p}\big(T_{f,A}(r)\big)^{p}\\
&\leq& K_2 {1\over (1-r)^p}\left(\log {1\over 1-r}\right)^{p},
\end{eqnarray*}
for all $0<r_0\leq r<1$ outside a set $E$ with $\int_E{1\over 1-r}dr<\infty.$ 
By Proposition 4.1.8 in \cite{book:minru2}, varying a constant $K_2$ slightly, we may assume that the above inequality holds for all $r\in [0,1).$
Therefore
\begin{equation}\label{keyest}
		\int_0^{2\pi } \left\|{\mathcal P(j_k(f))(re^{i \theta })\over s_D(f(re^{i \theta }))}\right\|_{h^{-1}}^t{d\theta \over 2\pi} \leq K \left( {1 \over 1-r} \right) ^{p}  \left( \log {1 \over 1-r} \right)^{p}
	\end{equation}
with a suitable positive constant $K$ for all $r\in [0,1).$

Let 
\begin{equation}\label{v}
		v :=  \log \left( \left\| { \mathcal P(j_k(f))  \over s_D(f)}\right\|_{h^{-1}}  \cdot {\big\|f\big\|_A^{\gamma_{A,L}}\over \big\|f\big\|_D}\right)+  \sum_{j=1}^q u_j.
\end{equation}
	We first show that $v$ is subharmonic. Indeed, from the proof of Theorem \ref{smtyamanoi} (especially by using (\ref{multi})), we have 
\begin{eqnarray*}
2 dd^c \log   \left\| { \mathcal P(j_k(f))  \over s_D(f)}\right\|_{h^{-1}} 
&\ge& f^*c_1(L \otimes [D]) - \sum_{j=1}^q \min\{\nu^f(D_j), k\}\\
&=&f^*c_1(L) + f^*c_1([D]) - \sum_{j=1}^q \min\{\nu^f(D_j), k\}.
\end{eqnarray*}
Hence
\begin{eqnarray*}
	2 dd^c v
&\ge& f^*c_1(L) + f^*c_1([D]) - \sum_{j=1}^q \min\{\nu^f(D_j), k\} \\
&~& + ~ \gamma_{A,L}f^*c_1([A]) - f^*c_1([D]) + \sum_{j=1}^{q}2dd^cu_j \\
&=& f^*\big(\gamma_{A,L}c_1([A]) + c_1(L)\big) + \sum_{j=1}^{q}dd^c(2u_j-\log |\varphi_j|^2) \\
&~&+ ~ \sum_{j=1}^{q} dd^c\log |\varphi_j|^2 - \sum_{j=1}^q \min\{\nu^f(D_j), k\}\ge 0.
\end{eqnarray*}
So $v$ is subharmonic. 
By the growth condition of $f$, there exists a subharmonic function $w \not \equiv 0$ on $B(1)$ such that 
\begin{equation}\label{w}
\lambda e^w  \leq  \big\|f\big\|_A^{\rho},
\end{equation}
	where $\omega ={\sqrt{-1}\over 2\pi} \lambda dz\wedge d\bar{z}$ is the metric form on $M$.
	Let 
	\begin{equation*}
	 u=w+tv
	\end{equation*}
	where $t$ is a positive number to be determined later.  By (\ref{v}) and (\ref{w}),we get
	\begin{eqnarray*}\label{int}
		 e^u \lambda^2
&\leq&    \left\| { \mathcal P(j_k(f))  \over s_D(f)}\right\|^t\cdot {\big\|f\big\|_A^{t\gamma_{A, L}}\over \big\|f\big\|_D^t}\cdot e^{\sum_{j=1}^q tu_j } \cdot \big\|f\big\|_A^{2\rho} \\
&\leq&   \left\| { \mathcal P(j_k(f))  \over s_D(f)}\right\|^t\cdot {\big\|f\big\|_A^{t\gamma_{A, L}}\over \big\|f\big\|_D^t}\cdot {\big\|f\big\|_D^t\over \|f\|_A^{t\sum_{j=1}^q\delta_j}} \cdot \big\|f\big\|_A^{2\rho} \\
&=&  \left\| { \mathcal P(j_k(f))  \over s_D(f)}\right\|^t\cdot  \big\|f\big\|_A^{t(\gamma_{A, L}-\sum_{j=1}^q\delta_j)+2\rho}.
	\end{eqnarray*}
	By letting
$$
t\Big(\gamma_{A, L}-\sum_{j=1}^q\delta_j\Big)+2\rho = 0,
$$
we get, from above, 
$$ e^u \lambda^2\leq  \left\| { \mathcal P(j_k(f))  \over s_D(f)}\right\|^t.$$
	We check that $tm < 1.$ Indeed, by (\ref{key}), 
$$
tm = {2\rho m\over\sum_{j=1}^q\delta_j-\gamma_{A, L}} < {2\rho m\over \gamma_{A, L} + 2\rho m - \gamma_{A, L}} = 1.
$$	
	Hence, by (\ref{keyest}), 
	\begin{equation*}
		\int_0^{2\pi}   (e^u\lambda^2)(re^{i\theta}) d\theta  \leq K {1 \over (1-r)^{p}} \left(\log {1 \over 1-r} \right)^{p} 
		\leq K {1 \over (1-r)^{p'}},
	\end{equation*}
where $p<p'<1$.
Therefore
\begin{equation*}
		\int_{B(1)}   (e^u\lambda^2)(re^{i\theta}) rdr  d\theta  \leq K \int_0^1 {rdr\over (1-r)^{p'}} < \infty.	\end{equation*}
On the other hand,   by the result of \cite{Yau76} (or \cite{Karp82}), we have necessarily
	\begin{equation*}
		\int_{B(1)}  e^u  dV =  \infty ,
	\end{equation*}
	which is a contradiction. This completes the proof.
\end{proof}

We recall the following Proposition  from \cite{Brotbek} (For notations, see \cite{Brotbek}).
\begin{proposition}\label{Wronskian}(\cite{Brotbek}, Proposition 2.3). 
For any $s_0, \dots,,s_k \in H^0(X, L)$, 
the locally defined jet differential equations $W_U(s_0,...,s_k)$ 
 glue together into a section 
 $$W(s_0, \dots, s_k)\in H^0(X, E_{k,{k(k+1) \over 2}}^{GG}\Omega_X\otimes L^{k+1}).$$
 \end{proposition}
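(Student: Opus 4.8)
The plan is to build the global section out of local Wronskians on a trivializing cover of $L$, and to observe that the classical transformation law of a Wronskian under multiplying all its entries by a common unit is exactly the transition cocycle of $L^{k+1}$.

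First I would fix an open cover $\{U\}$ of $X$ trivializing $L$, with frames $e_U$ and transition functions $g_{UV}\in\mathcal{O}_X^{*}(U\cap V)$, so that each $s\in H^0(X,L)$ is given by local holomorphic functions $s^{U}$ satisfying $s^{U}=g_{UV}\,s^{V}$ on $U\cap V$. On each $U$ I would set
\[
W_U:=\det\bigl(d^{j}s_i^{U}\bigr)_{0\le i,j\le k},
\]
with the convention $d^{0}s:=s$, where $d$ is the jet-differentiation operator recalled above (so that $d\phi(j_1(f))=\tfrac{d}{dt}\phi(f(t))\big|_{t=0}$ for a function $\phi$). Each $d^{j}s_i^{U}$ is a holomorphic jet differential of order $j$ and degree $j$ intrinsically attached to the function $s_i^{U}$ (it does not depend on a choice of coordinates on $X$), so $W_U$ is a holomorphic jet differential on $U$. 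Since $(d^{j}\phi)(\lambda\cdot j_k(f))=\lambda^{j}(d^{j}\phi)(j_k(f))$ for the action $\lambda\cdot j_k(f)=j_k(f_\lambda)$, every entry of the $j$-th row of the defining matrix is homogeneous of degree $j$; hence $W_U$, an alternating sum of products taking one entry from each row, is homogeneous of degree $0+1+\cdots+k=k(k+1)/2$. Thus $W_U\in H^0\bigl(U,E_{k,k(k+1)/2}^{GG}\Omega_X\bigr)$.

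Next I would prove the key identity: for a unit $g\in\mathcal{O}_X^{*}(U)$ and functions $h_0,\dots,h_k\in\mathcal{O}_X(U)$,
\[
\det\bigl(d^{j}(gh_i)\bigr)_{0\le i,j\le k}=g^{\,k+1}\det\bigl(d^{j}h_i\bigr)_{0\le i,j\le k}.
\]
Since $d$ is a derivation, induction yields $d^{j}(gh_i)=\sum_{l=0}^{j}\binom{j}{l}(d^{j-l}g)(d^{l}h_i)$, which exhibits the matrix $\bigl(d^{j}(gh_i)\bigr)_{j,i}$ as a product $B\,C$, where $B=\bigl(\binom{j}{l}d^{j-l}g\bigr)_{0\le l,j\le k}$ is lower triangular with every diagonal entry equal to $g$, and $C=\bigl(d^{l}h_i\bigr)_{0\le l,i\le k}$. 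Passing to determinants gives the identity, with $\det B=g^{\,k+1}$.

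Finally, applying the identity with $g=g_{UV}$ and $h_i=s_i^{V}$ gives $W_U=g_{UV}^{\,k+1}W_V$ on each overlap $U\cap V$. As $\{g_{UV}^{\,k+1}\}$ is precisely the transition cocycle of $L^{k+1}$, these relations say that the $W_U$ glue to a single global section $W(s_0,\dots,s_k)\in H^0\bigl(X,E_{k,k(k+1)/2}^{GG}\Omega_X\otimes L^{k+1}\bigr)$, which is the asserted object. The argument is essentially formal; the only step that calls for genuine (though routine) care is the Leibniz-type matrix factorization behind the displayed Wronskian identity, so that is where I would expect whatever friction there is.
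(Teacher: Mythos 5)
Your proof is correct and follows essentially the same route as the cited reference (Brotbek, Proposition 2.3), which the paper itself quotes without reproving: local Wronskians of the trivialized sections, homogeneity of degree $0+1+\cdots+k=k(k+1)/2$ under the $\mathbb{C}^*$-action, and the transformation law $\det\bigl(d^{j}(g h_i)\bigr)=g^{k+1}\det\bigl(d^{j}h_i\bigr)$ obtained from the Leibniz rule via the triangular matrix factorization, which identifies the gluing cocycle with that of $L^{k+1}$. Nothing to add.
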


In our case,  we let $X={\Bbb P}^n(\Bbb C)$, $D=H_1+\cdots +H_q$, where $H_1, \dots, H_q$ are  the hyperplanes in ${\Bbb P}^n(\Bbb C)$ in general position, and
$L:=K_{{\Bbb P}^n(\Bbb C)}={\mathcal O}_{{\Bbb P}^n({\Bbb C})}(-(n+1))$, the canonical line bundle of ${\Bbb P}^n(\Bbb C)$. Let 	\begin{equation*}
\mathcal P: =W(z_0, \dots, z_n)= \det \left( {\begin{array}{*{20}{c}}
	z_0 &  \ldots  & z_n \\	
	dz_0 &  \ldots  & dz_n \\
	\vdots  &  \ddots  &  \vdots   \\
	{d^n z_0} &  \cdots  & {d^ nz_n}  \\
	
	\end{array} } \right),
\end{equation*}
where $z_0, \cdots, z_n$ are global sections of $\mathcal O_{\mathbb P ^n(\mathbb C)} (1) $. Then Proposition \ref{Wronskian} implies that 
  $\mathcal P\in H^0 \Big( X,E_{n,{n(n+1) \over 2}}^{GG}\Omega _{\mathbb P ^n (\mathbb C)} \otimes L^{-1}\Big)$.
 
 We  now show that $J_nH_1 + \cdots + J_nH_q \subset \pi_n^*(\mathcal P = 0)$.
Fix arbitrary point $x$ in $\mathbb P ^n (\mathbb C)$. There exists a linear change of coordinates $w_0, \cdots, w_n$ of  $z_0, \cdots, z_n$, i.e., $w_i = \sum\limits_{j = 0}^n a_{ij}z_j$ for some $a_{ij} \in \mathbb C$ with $\det (A) \ne 0 $ where $A = (a_{ij})$, such that in a local chart $V$ around $x$, we have $D|_V = \{w_0 \cdots w_t = 0\}$ for some $0 \leq t \leq n$. Note that we have
	\begin{equation}\label{Wronchange}
\mathcal P =\det(A)^{-1} \cdot \det \left( {\begin{array}{*{20}{c}}
w_0 &  \ldots  & w_n \\	
	dw_0 &  \ldots  & dw_n \\
	\vdots  &  \ddots  &  \vdots   \\
	d^n w_0 &  \cdots  & d^n w_n \\
	
	\end{array} } \right).
\end{equation}
Also note that $J_nH_1 + \cdots + J_nH_q \big|_{\pi_n^{-1}(V)}$ is defined by the ideal 
\begin{equation*}
	(w_1, dw_1, \cdots, d^n w_1) \cdot 	(w_2, dw_2, \cdots, d^n w_2) \cdots (w_t, dw_t, \cdots, d^n w_t).
\end{equation*}
Then it is easy to see that, from (\ref{Wronchange}),
\begin{equation*}
\mathcal P |_V \in (w_1, dw_1, \cdots, d^n w_1) \cdot 	(w_2, dw_2, \cdots, d^n w_2) \cdots (w_t, dw_t, \cdots, d^n w_t).
\end{equation*}
Hence we have $J_nH_1 + \cdots + J_nH_q \subset \pi_n^*(\mathcal P = 0)$. 
Thus Theorem \ref{yamanoinonint}  with this jet differential $\mathcal P$ gives
 Fujimoto's result (Theorem \ref{fujimoto thm}).

\section{Non-integrated defect relation for holomorphic maps into ${\Bbb P}^n(\Bbb C)$ intersecting high degree generic hypersurfaces}
We briefly recall the construction of Demaily-Semple jet tower as follows, we refer the readers \cite{rosseauimpa} or the original paper of Demailly \cite{Dem} for details. Let ${\mathbb G}_k$ be the group of germs of $k$-jets of biholomorphisms of
$({\mathbb C}, 0)$, that is, the group of germs of biholomorphic maps
$\phi: t \mapsto a_1 t + a_2 t^2+\cdots+a_kt^k$, $a_1\in {\mathbb C}^*$, $a_j\in {\mathbb C}$ for $j\geq1$,
in which the composition law is taken modulo terms $t^j$
of degree $j>k$. The group $\mathbb{G}_k$ has a natural action on $J_k(X)$ given by $\phi\cdot j_k(f):=j_k(f\circ \phi)$ for any $\phi \in \mathbb{G}_k$.
% The action consists of parametrized $k$-jets of maps $f: ({\mathbb C}, 0) \rightarrow (X, x)$ by a biholomorphic change of parameter $\phi: ({\mathbb C}, 0) \rightarrow ({\mathbb C}, 0)$ defined by $(f,\phi) \mapsto f\circ \phi$. 
Denote by
$$J^{\text{reg}}_k X:=\{j_k(f)\in J_k X~|~f'(0)\not=0\},$$
the space of regular jets.
 % We start with the pair $(X, V)$, where $V \subset T_X$ is a subbundle. First we take $V=T_X$ and  we define $X_1 := {\mathbb P}(T_X)$, and the bundle $V_1 \subset T_{X_1}$ is defined fiberwisely by
% $$
% V_{1,(x,[v])} := \{w \in   T_{X_1}(x,[v]) : d\pi_x(w)\in {\mathbb C}v\}
% $$
% where $\pi: X_1 \rightarrow X$ is the canonical projection and $v\in V$. 
% We also have the following parallel description of $V_1$. 
% Consider a non-constant holomorphic disk $u: ({\mathbb C}, 0) \rightarrow  (X, x)$. 
% We can lift it to $X_1$ and denote
% the resulting germ by $u_1$. Then the derivative of $u_1$ belongs to the $V_1$ directions. 
 Consider the pairs $(X', V')$, where $X'$ is a complex manifold, $V' \subset T_{X'}$ is a subbundle, and denote by $\pi':V'\to X'$ the natural projection. Starting from $(X,T_X)$, define $X_1 := {\mathbb P}(T_X)$, let $\pi_{0,1}:X_1\to X$ the natural projection.  Define the bundle $V_1 \subset T_{X_1}$  fiberwise by
 $$
 V_{1,(x,[v])} := \{w \in   T_{X_1, (x,[v])} | (d\pi_{0,1})_x(w)\in {\mathbb C}v\}.$$
In other words,  $V_1$ is characterized by the exact sequence
 $$0\rightarrow T_{X_1/X}\rightarrow V_1\xrightarrow{d\pi_{0,1}} {\mathcal O}_{X_1}(-1)\rightarrow 0$$
 where ${\mathcal O}_{X_1}(-1)$  is the tautological line bundle on $X_1$, and $T_{X_1/X}$  is the
 relative tangent bundle corresponding to the fibration $\pi_{0, 1}$.
Inductively by this procedure,  
we get the {\it Demailly-Semple tower}
$$
\xymatrixcolsep{2pc}\xymatrix{
	  (X_k,V_k)\ar[r]^-{\pi_{k-1,k}} &(X_{k-1},V_{k-1})\ar[r]^-{\pi_{k-2,k-1}}&\cdots \rightarrow (X_1,V_1)\ar[r]^{\pi_{0,1}}&(X,T_X).
	  }
$$
Then we have an embedding $J^{reg}_k X/{\mathbb G}_k \rightarrow X_k$.
Denote by $X_k^{\text{reg}}$ the image of this embedding in $X_k$ 
and denote by $X_k^{\text{sing}}:=X_k\backslash X_k^{\text{reg}}$. Then $X_k^{\text{sing}}$ is a divisor in $X_k$. 
Denote by  $\pi_k: X_k \rightarrow  X$ the projection, then the direct
image sheaf $\pi_{k*}{\mathcal O}_{X_k}(m)$ is isomorphic to ${\mathcal E}_{k,m}\Omega_X,$ i.e., 
$$
\pi_{k*}{\mathcal O}_{X_k}(m)\simeq{\mathcal E}_{k,m}\Omega_X,
$$  
whose
sections are precisely the invariant jet differentials $P$ of order $k$ and degree $m$, i.e., for any $g\in \mathbb{G}_k$ and any $j_k (f)\in J^{reg}_k (X)$, 
$P(j_k(f\circ g))=g'(0)^m P(j_k (f)).$ We shall denote the associated
vector bundle by $E_{k, m}\Omega_X$.  The fiber of $\pi_k$ at a non-singular point of $X$ is denoted by ${\mathcal R}_{n,k}$, 
it is a rational manifold, and it is a compactification of the quotient
$({\mathbb C}^{nk}-\{0\})/{\mathbb G}_k$.

 The above definition can be also extended to the logarithmic setting. 
We now briefly recall the construction of the logarithmic version of Demaily-Semple tower due to Dethloff-Lu \cite{DL01}. A
 {\it logarithmic directed manifold} is a triple $(X, D, V)$
where $(X,D)$ is a log-manifold, $V$ is a subbundle of $T_X(-\log D)$. 
A morphism between logarithmic directed manifolds $(X',D',V')$ and $(X,D,V)$ is given by a holomorphic map $f:X'\to X$ such that $f^{-1}D\subset D'$ and $f_*V'\subset V$.  
% Given log-manifolds $(X', D')$ and $(X, D)$,
%  a holomorphic map $f: X'\rightarrow X$  such
% that $f^{-1}D \subset D'$ will be called a log-morphism from $(X', D')$ to $(X, D)$.
 The  logarithmic Demailly-Semple $k$-jet tower
 \begin{eqnarray*}
 (X_k(D), D_k, V_k) &\xrightarrow{\pi_{k-1,k}}& (X_{k-1}(D), D_{k-1}, V_{k-1}) \xrightarrow{\pi_{k-2,k-1}}
 \cdots \\
 &\rightarrow&  (X_1(D), D_1, V_1) \xrightarrow{\pi_{0,1}} (X,D, T_X(-\log D))
 \end{eqnarray*}
 is constructed  inductively as follows:  Starting from
 $V_0=V=T_X(-\log D)$, define
$X_k(D):={\mathbb P}(V_{k-1})$, and let $\pi_{k-1, k}: X_k(D)\rightarrow X_{k-1}(D)$
be the natural projection. Set $D_k:=(\pi_{k-1, k})^{-1}(D_{k-1})$
which is a simple
normal crossing divisor. Note that $\pi_{k-1, k}$  induces a morphism
$$(\pi_{k-1, k})_*: T_{X_k(D)}(-\log D_k)\rightarrow (\pi_{k-1, k})^*T_{X_{k-1}(D)}(-\log D_{k-1}).$$
Define $V_k: =(\pi_{k-1, k})_*^{-1}
{\mathcal O}_{X_k(D)}(-1)\subset T_{X_k(D)}(-\log D_k)$, where 
${\mathcal O}_{X_k(D)}(-1): = {\mathcal O}_{{\mathbb P}(V_{k-1})}(-1)$
is the tautological line bundle, which by definition is also a subbundle of
$\pi_{k-1,k}^*V_{k-1}$.  In our case, denoting by $X_k(D)$ the log Demailly $k$-jet tower associated to $(X, D,T_X(-\log D)).$
By Proposition 3.9 in \cite{DL01}, we have
$${\mathcal E}_{k,m}\Omega_X(\log D) =(\pi_k)_*{\mathcal O}_{X_k(D)}(m),$$ 
where  ${\mathcal E}_{k,m}\Omega_X(\log D)$  is the subsheaf of 
${\mathcal E}^{GG}_{k,m}\Omega_X(\log D)$ consisting of
invariant logarithmic differential operators $P$ of order $k$ and degree $m$.

Let $X$ be a smooth projective variety and $D$ be a Cartier divisor on $X$. Recall that the 
stable base locus of $D$ is 
defined by $B(D):=\bigcap_{m\in {\mathbb N}} Bs(|mD|)$. We recall the following result which is due to D. Brotbek and Y. Deng. 
\begin{theorem}[\cite{BD2}, Corollary 4.5]\label{cor4.5}
	Let $X$ be a smooth projective variety of dimension $n$. Let $A$ be a very ample line bundle on $X$. Let $k,$ $k',$ $\epsilon,$ $\delta,$ $r$ be positive integers such that $k=n+1,$ $k'={k(k+1) \over 2},$ $\epsilon \geq k,$ $\delta=(k+1)n+k,$ and $r> \delta^{k-1}k(\epsilon+k\delta)$. Then there exist $\beta, \tilde \beta \in \Bbb N$ such that for any $\alpha \geq 0$, and for a generic hypersurface $D \in \big|A^{\epsilon+(r+k)\delta}\big|$, we have
	\begin{equation*}
	 B \left(\mathcal O_{X_k(D)}(\beta+ \alpha \delta^{k-1}k') \otimes \pi _{0,k}^*A^{\tilde \beta+ \alpha(\delta^{k-1} k(\epsilon+ k\delta)-r)}\right)
	\subset X_k(D)^{sing} \cup \pi _{0,k}^{-1}(D).
	\end{equation*}
	\end{theorem}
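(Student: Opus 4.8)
The plan is to follow the Wronskian strategy of Brotbek and Brotbek--Deng (cf. \cite{Brotbek}, \cite{BD2}): attach to a carefully presented hypersurface $D\in|A^d|$, with $d=\epsilon+(r+k)\delta$, an explicit family of invariant logarithmic jet differentials built from Wronskians of auxiliary sections extracted from the defining equation of $D$; show that the common zero locus of this family inside the logarithmic Demailly tower $X_k(D)$ is contained in $X_k(D)^{sing}\cup\pi_{0,k}^{-1}(D)$; and then deduce the statement for a generic $D$ by a semicontinuity argument. The numerical hypotheses $k=n+1$, $k'=k(k+1)/2$, $\epsilon\ge k$, $\delta=(k+1)n+k$, $r>\delta^{k-1}k(\epsilon+k\delta)$ are tuned precisely so that the twists produced by the construction fit into the stated bundle $\mathcal O_{X_k(D)}(\beta+\alpha\delta^{k-1}k')\otimes\pi_{0,k}^*A^{\tilde\beta+\alpha(\delta^{k-1}k(\epsilon+k\delta)-r)}$, and so that the supply of jet differentials is large enough to control the base locus.

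First I would set up the universal picture. Fixing a basis of $H^0(X,A)$, I would restrict attention to a subvariety $\mathcal H\subset|A^d|$ parametrizing hypersurfaces whose defining section $\sigma$ has a prescribed ``Fermat-type'' shape relative to that basis (a moving-coefficients deformation of a section of the form $\sum_i a_i^{\ell}m_i$); this presentation is what makes the later Wronskian computations tractable while still being flexible enough to reach a generic member of $|A^d|$. Over $\mathcal H$ I would build the relative logarithmic Demailly tower $\mathcal X_k\to\mathcal X\to\mathcal H$, so that the fiber over $[\sigma]$ is $X_k(D_\sigma)$ with $D_\sigma=\{\sigma=0\}$, and so that the bad locus $X_k(D_\sigma)^{sing}\cup\pi_{0,k}^{-1}(D_\sigma)$, the line bundle in question, and its stable base locus all spread out over $\mathcal H$.

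Next comes the core construction. From the presentation of $\sigma$ one extracts $k+1$ auxiliary sections $f_0,\dots,f_k$ of twists of $A$ of controlled degree (built from $\sigma$ and from the monomials occurring in it); applying a (logarithmic) Wronskian construction in the spirit of Proposition~\ref{Wronskian} to $f_0,\dots,f_k$ and dividing by the appropriate power of $\sigma$ produces an element of $H^0\big(X,{\mathcal E}_{k,k'}\Omega_X(\log D)\otimes A^{s}\big)$ for a suitable $s$, i.e. a section of $\mathcal O_{X_k(D)}(k')\otimes\pi_{0,k}^*A^{s}$ that vanishes along $X_k(D)^{sing}$. Letting the data defining the $f_i$ vary produces a linear subsystem of such jet differentials; tracking these twists through the construction --- the case $\alpha=0$ giving sections of $\mathcal O_{X_k(D)}(\beta)\otimes\pi_{0,k}^*A^{\tilde\beta}$ and the $\alpha$-linear part coming from tensoring $\alpha$ copies of a fixed Wronskian-built package together with its $A$-twists --- yields exactly sections of $\mathcal O_{X_k(D)}(\beta+\alpha\delta^{k-1}k')\otimes\pi_{0,k}^*A^{\tilde\beta+\alpha(\delta^{k-1}k(\epsilon+k\delta)-r)}$. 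The inequality $r>\delta^{k-1}k(\epsilon+k\delta)$ is the bookkeeping condition ensuring both that this $A$-exponent is negative and that enough independent Wronskians are available.

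It then remains to prove the emptiness of the residual base locus: for the universal $\sigma$ (equivalently, for very general coefficients), the common zero locus of all the Wronskian sections above, intersected with $X_k(D)\setminus\big(X_k(D)^{sing}\cup\pi_{0,k}^{-1}(D)\big)$, is empty. A point of that set is a regular $k$-jet $j_k(f)$ with $f(0)\notin D$, and vanishing of every $W(f_0,\dots,f_k)$ at $j_k(f)$ forces a system of polynomial relations between the $k$-jet data of $f$ and the coefficients of $\sigma$ that cannot hold for generic coefficients; this is exactly what the moving-coefficients lemma of \cite{Brotbek}, \cite{BD2}, combined with a dimension count (showing that the locus in $\mathcal X_k$ where all Wronskians vanish, minus the bad locus, does not dominate $\mathcal H$), is designed to establish. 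Granting this, since the stable base locus of the line bundle restricted to any fiber is contained in the common zero locus of any fixed family of global sections of a power of it, restricting our Wronskian sections to a generic fiber yields $B\big(\mathcal O_{X_k(D)}(\beta+\alpha\delta^{k-1}k')\otimes\pi_{0,k}^*A^{\tilde\beta+\alpha(\delta^{k-1}k(\epsilon+k\delta)-r)}\big)\subset X_k(D)^{sing}\cup\pi_{0,k}^{-1}(D)$ for a generic $D\in|A^d|$, the passage from the explicit sub-family to a generic member of the full linear system using that the relevant condition is Zariski open, as in \cite{BD2}. I expect the main obstacle to be this emptiness/dimension-count step: making the ``generic coefficients'' argument rigorous requires the fine description of the Wronskian ideal sheaves and a delicate stratification of their common vanishing locus --- the technical heart of the Brotbek and Brotbek--Deng machinery (effective Nakamaye-type vanishing, Wronskian ideals, and the moving-coefficients lemma).
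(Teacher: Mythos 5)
The paper does not prove this statement at all: it is imported verbatim as \cite{BD2}, Corollary 4.5, and the authors simply cite Brotbek--Deng. So there is no internal proof to compare yours against; what you have written is an outline of the argument in the cited reference. As a summary of that argument your outline is faithful in its broad strokes --- the Fermat-type presentation of the defining section, the family of logarithmic Wronskians valued in $\mathcal O_{X_k(D)}(k')\otimes\pi_{0,k}^*A^{s}$, the relative Demailly tower over the parameter space, and the reduction to a dimension count over the universal family are indeed the skeleton of \cite{BD2}.

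As a proof, however, the proposal has a genuine gap, and you flag it yourself: the entire content of the theorem lives in the step you ``grant,'' namely that for generic coefficients the common zero locus of the Wronskian sections meets $X_k(D)$ only inside $X_k(D)^{sing}\cup\pi_{0,k}^{-1}(D)$, and that this control of a finite family of sections upgrades to control of the \emph{stable} base locus of the specific $\alpha$-dependent bundle. Neither is routine. The first requires the Wronskian ideal sheaf formalism, the blowup of $X_k(D)$ along it, and an effective Nakamaye/Nefness statement for the twisted tautological bundle on that blowup --- this is where the exact values $\delta=(k+1)n+k$, $\epsilon\ge k$, and $r>\delta^{k-1}k(\epsilon+k\delta)$ actually enter, not merely as ``bookkeeping.'' The second is where the uniform constants $\beta,\tilde\beta$ (independent of $\alpha$) come from: the statement is an interpolation between a nef class and an antiample twist, not literally a supply of sections of each individual bundle in the family, so your description of ``tensoring $\alpha$ copies of a fixed Wronskian-built package'' does not by itself produce the claimed inclusion for every $\alpha\ge 0$. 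Since none of this is carried out, the proposal is a correct roadmap to the literature rather than a proof; in the context of this paper that is exactly what the authors themselves do by citing \cite{BD2}.
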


We apply the above theorem to obtain the following result.
\begin{proposition}\label{DB}
	Let $X$ be a smooth projective variety of dimension $n \geq 2$. Let $A$ be a very ample line bundle on $X$. Fix a positive integer $c$. Let $D \in \big|A^d\big|$ be a generic smooth hypersurface in $X$ with 
	$$d \geq (n+1)^{n+3}\Big(n+1+{c \over 2}\Big)^{n+3}.$$
	Let $f: B(R_0)\subset {\Bbb C}\rightarrow X$, $0<R_0 \leq \infty$, be a non-constant holomorphic map with $f(B(R_0)) \not\subset  \text {supp}(D)$. Then,  for jet order $k=n+1$, there exists some global logarithmic jet differential
		$$\mathcal P \in H^0\left( X,E_{k,m}^{GG}\Omega _X( \log D ) \otimes A^{- \tilde m} \right)$$
	such that
	$$\mathcal P (j_k(f)) \not  \equiv 0,$$
	for some $m,\tilde m \in \mathbb N$ with $\tilde m > cm$.
	\end{proposition}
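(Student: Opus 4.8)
The plan is to derive the statement from the Brotbek--Deng result, Theorem~\ref{cor4.5}, applied to the logarithmic Demailly--Semple tower $X_k(D)$, after choosing the numerical parameters suitably. Set $k=n+1$, $\delta=(k+1)n+k=k^2+k-1$ and $k'=k(k+1)/2$, as required there. First I would fix $\epsilon$ and $r$: let $\epsilon$ be the representative of $d$ modulo $\delta$ lying in $[k,k+\delta)$, and put $r=(d-\epsilon)/\delta-k$; then automatically $\epsilon\ge k$, $r\in\ZZ_{>0}$, and $\epsilon+(r+k)\delta=d$, i.e.\ $D\in|A^{\epsilon+(r+k)\delta}|$. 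The role of the hypothesis on $d$ is to force the strengthened inequality
\[
\delta^{k-1}\bigl(k(\epsilon+k\delta)+ck'\bigr)<r .
\]
This is a direct estimate: since $r\delta=d-\epsilon-k\delta$ with $\epsilon<k+\delta$, and using $k(k+1)=\delta+1$, $k'=(\delta+1)/2$ and $\delta<k(k+\tfrac c2)$, one checks that the inequality is implied by $d\ge(n+1)^{n+3}(n+1+\tfrac c2)^{n+3}$. In particular $r>\delta^{k-1}k(\epsilon+k\delta)$, so $(k,k',\epsilon,\delta,r)$ is an admissible tuple for Theorem~\ref{cor4.5}.

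Next I would apply Theorem~\ref{cor4.5}: it provides $\beta,\tilde\beta\in\NN$ such that, for the generic $D\in|A^d|$ and every $\alpha\ge0$, the stable base locus of
\[
L_\alpha:=\mathcal O_{X_k(D)}\bigl(\beta+\alpha\delta^{k-1}k'\bigr)\otimes\pi_{0,k}^*A^{\tilde\beta+\alpha(\delta^{k-1}k(\epsilon+k\delta)-r)}
\]
is contained in $X_k(D)^{sing}\cup\pi_{0,k}^{-1}(D)$, where $\pi_{0,k}\colon X_k(D)\to X$ is the projection. Write $m_\alpha=\beta+\alpha\delta^{k-1}k'$ and $\tilde m_\alpha=\alpha\bigl(r-\delta^{k-1}k(\epsilon+k\delta)\bigr)-\tilde\beta$. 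The displayed inequality on $r$ shows that $r-\delta^{k-1}k(\epsilon+k\delta)-c\delta^{k-1}k'>0$, hence for $\alpha$ large enough one has $\tilde m_\alpha>0$ and $\tilde m_\alpha>c\,m_\alpha$; fix such an $\alpha$ and set $m=m_\alpha$, $\tilde m=\tilde m_\alpha$, $L=L_\alpha=\mathcal O_{X_k(D)}(m)\otimes\pi_{0,k}^*A^{-\tilde m}$. Since $(\pi_{0,k})_*\mathcal O_{X_k(D)}(\ell)=E_{k,\ell}\Omega_X(\log D)$ is a subsheaf of $E_{k,\ell}^{GG}\Omega_X(\log D)$, the projection formula identifies any section of a power $L^{\otimes\ell}$ with an element of $H^0\bigl(X,E_{k,\ell m}^{GG}\Omega_X(\log D)\otimes A^{-\ell\tilde m}\bigr)$, and $\ell\tilde m>c\,\ell m$.

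Finally I would extract the non-vanishing from the base-locus bound. Because $f$ is non-constant, its $k$-th Demailly lift $f_{[k]}\colon B(R_0)\to X_k(D)$ is a well-defined holomorphic map; as $f'(z)\ne0$ (so that $j_k(f)$ is a regular jet) for generic $z$, we get $f_{[k]}(B(R_0))\not\subset X_k(D)^{sing}$, and since $f(B(R_0))\not\subset\supp D$ we also have $f_{[k]}(B(R_0))\not\subset\pi_{0,k}^{-1}(D)$. Hence $\{z:f_{[k]}(z)\in X_k(D)^{sing}\}$ and $\{z:f(z)\in\supp D\}$ are both discrete in $B(R_0)$, so there is a point $z_0$ with $p:=f_{[k]}(z_0)\notin X_k(D)^{sing}\cup\pi_{0,k}^{-1}(D)\supseteq B(L)$. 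Thus $p\notin B(L)$, so there are $\ell\ge1$ and $s\in H^0(X_k(D),L^{\otimes\ell})$ with $s(p)\ne0$; let $\mathcal P\in H^0\bigl(X,E_{k,\ell m}^{GG}\Omega_X(\log D)\otimes A^{-\ell\tilde m}\bigr)$ be the logarithmic jet differential corresponding to $s$. Since $\mathcal P(j_k(f))$ and $s\circ f_{[k]}$ differ by a factor which is not identically zero (again as $f$ is non-constant) and $s(f_{[k]}(z_0))\ne0$, we conclude $\mathcal P(j_k(f))\not\equiv0$; renaming $(\ell m,\ell\tilde m)$ as $(m,\tilde m)$, which preserves $\tilde m>cm$, completes the argument. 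I expect the main obstacle to be the bookkeeping of the first step -- realizing the prescribed degree $d$ by an admissible tuple while keeping $r$ large enough to yield $\tilde m>cm$; once the Demailly-tower dictionary (recalled above, after \cite{DL01}) is in place, the passage from the base-locus statement to non-vanishing along $f_{[k]}$ is routine.
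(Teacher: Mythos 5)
Your proposal is correct and follows essentially the same route as the paper: the same choice of $k$, $k'$, $\delta$, the same decomposition $d=\epsilon+(r+k)\delta$ with $r$ large enough that $\tilde m(\alpha)>c\,m(\alpha)$ for $\alpha\gg0$, and the same appeal to the Brotbek--Deng base-locus theorem. If anything you are more careful than the paper on the final step (passing to a power $L^{\otimes\ell}$ to realize a section outside the stable base locus and deducing $\mathcal P(j_k(f))\not\equiv0$ from $f_{[k]}(B(R_0))\not\subset X_k(D)^{sing}\cup\pi_{0,k}^{-1}(D)$, which the paper leaves implicit), while being terser on the explicit numerical verification that $d\geq(n+1)^{n+3}(n+1+\tfrac c2)^{n+3}$ suffices, which the paper carries out in detail (restricting to $c\geq3$ there).
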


\begin{proof} 
	For the convenience of computation, we may assume $c \geq 3$. For the cases $c=1$ and $c=2$, please see Corollary 4.9 in \cite{BD2} and Theorem 2.2 in \cite{H} respectively.
	
	Let $k=n+1, k'={k(k+1) \over 2}, \delta=(k+1)n+k=n^2+3n+1$ and let $r_0=c\delta ^{k-1}k'+\delta^{k-1}(\delta +1)^2=\delta^{k-1}(\delta +1)\Big(\delta +1+ {c \over 2}\Big)$. By computation, we have $k(k+\delta -1 +k\delta)<(\delta +1)^2$. Hence, every integer $d \geq (r_0+k)\delta+2\delta $ can be written as
$$d=\epsilon+(r+k)\delta $$
where $k \leq \epsilon \leq k+\delta -1$ and $r > c \delta ^{k-1}k'+ \delta ^{k-1}k(\epsilon + k  \delta) $.

Applying Theorem \ref{cor4.5} for $\alpha$ large enough such that $\alpha(r-\delta^{k-1}k(\epsilon+k\delta))-\tilde \beta >0$, we see that there exists a global logarithmic jet differentical
$$\mathcal P_\alpha \in H^0\left( X,E_{k,m(\alpha)}^{GG}\Omega _X( \log D ) \otimes A^{- \tilde m(\alpha)} \right)$$
such that $\mathcal P _\alpha (j_k(f)) \not  \equiv 0$, where $$m(\alpha) = \beta + \alpha\delta^{k-1}k'$$ and $$\tilde m(\alpha) = \alpha(r-\delta^{k-1}k(\epsilon+k\delta))-\tilde \beta. $$ 

Note that 
\begin{equation*}
\mathop {\lim }\limits_{\alpha  \to \infty } {\tilde m(\alpha) \over m(\alpha)} = \mathop {\lim }\limits_{\alpha  \to \infty }{\alpha(r-\delta^{k-1}k(\epsilon+k\delta))-\tilde \beta  \over \beta+\alpha\delta^{k-1}k'} = {r-\delta^{k-1}k(\epsilon+k\delta) \over \delta^{k-1}k'} > c.
\end{equation*}
	So we can take $\tilde m = \tilde m(\alpha),$ $m = m(\alpha)$ and $\mathcal P =\mathcal P_\alpha$ for large enough $\alpha \gg 0$. 
	
	To complete the proof, it suffices to give an upper bound of $(r_0+k)\delta + 2\delta$. Recall that $k=n+1, k'={k(k+1) \over 2}, \delta=(k+1)n+k=n^2+3n+1$ and $r_0=\delta^{k-1}(\delta +1)\Big(\delta +1+ {c \over 2}\Big)$. Then we have
		\begin{eqnarray*}
		(r_0+k)\delta + 2\delta &=& (r_0+k+2)\delta  \\ 
			&=& 	\Big[\delta^{k-1}(\delta+1)\Big(\delta +1+ {c \over 2}\Big) +k+2\Big]\delta\\
			&=& 	\Big[(n^2+3n+1)^n(n^2+3n+2)\Big(n^2+3n+2+{c \over 2}\Big)+n+3\Big]\cdot\\
        &~&(n^2+3n+1)\\
		&\leq& (n^2+3n+2)^n(n^2+3n+2)\Big(n^2+3n+2+{c \over 2}\Big)(n^2+3n+2)\\
		&=& (n^2+3n+2)^{n+2}\Big(n^2+3n+2+{c \over 2}\Big).
	\end{eqnarray*}
	Note that $(n+1)\Big(n+1+{c \over 2}\Big) = n^2+  \Big(2+{c \over 2}\Big)n +\Big(1+{c \over 2}\Big) $
	
	Since $n \geq 2$ and $c \geq 3$, we have
	\begin{equation*}
n^2+  \Big(2+{c \over 2}\Big)n +\Big(1+{c \over 2}\Big) > n^2+3n+2+{c \over 2}.
	\end{equation*}
	Thus, we have
	\begin{equation*}
(r_0+k)\delta + 2\delta < (n+1)^{n+3}\Big(n+1+{c \over 2}\Big)^{n+3} .
	\end{equation*}
\end{proof}

\noindent{\it Proof of Theorem \ref{Brotbekdefect}}.

\begin{proof} (i) 
	By Proposition \ref{DB}, for jet order $k=n+1$, there exists some global jet differential 
	\begin{equation*}
	\mathcal P \in H^0\Big( X,E_{k,m}^{GG}\Omega _X (\log D)\otimes [{- \tilde m}A] \Big)
\end{equation*}
	such that 
	\begin{equation*}
	\mathcal P (j_k(f)) \not \equiv 0
\end{equation*}
	for some $m, \tilde m \in \mathbb N$ with $\tilde m > cm$. By Theorem \ref{smtXie}, we have 
		\begin{equation}\label{HXie}
	\tilde m T_{f, A}(r) \leq mN^{(1)}_f(r, D) + S(r,f,A).
	\end{equation}  
	In the case when $R_0 = \infty$ and in the case when $R_0<\infty$ but
		\begin{equation*}
		\limsup_{r\rightarrow R_{0}}  {T_{f, A}(r) \over  \log {1 \over R_0 -r}} = \infty,
			\end{equation*}
	we obtain from (\ref{HXie}), noticing that $D\in |dA|$,  
			\begin{equation*}
	\delta _1^{f,A, *}(D)  \leq d - {\tilde{m}\over m} < d-c.
	\end{equation*}
	This proved (i).
	
	We now prove (ii). Since the universal covering of $M$ is $B(R_0) \subset \mathbb C$, by lifting $f$ to $B(R_0),$ we may assume that $M = B(R_0)$.
 Note that 	$\delta _1^{f,A}(D) \leq \delta _1^{f,A, *}(D)$, so the conclusion is true when 
	either  $R_0 = \infty$ or $R_0<\infty$ but 
		\begin{equation*}
		\limsup_{r\rightarrow R_{0}}  {T_{f, A}(r) \over  \log {1 \over R_0 -r}} = \infty.
			\end{equation*}
	
			So we may assume 
		\begin{equation}\label{ass2}
		\limsup_{r\rightarrow R_{0}}  {T_{f,A}(r) \over  \log {1 \over R_0 -r}} < \infty.
	\end{equation}
	We assume that $R_0=1$. 
	We prove this case by contradiction.  Since $\tilde m > cm$, it suffices to derive a contradiction from
			\begin{equation*}
	\delta _1^{f,A}(D) >	d - {\tilde m \over m} + 2\rho .
\end{equation*}
	So we assume that $	\delta _1^{f,A}(D) > d - {\tilde m \over m} + 2\rho $. Then there is $\delta \geq 0$ satisfying
	$$\delta > d - {\tilde m \over m} + 2\rho$$
and 	 
\begin{equation}\label{brotnonintassumption}
f^* c_1([D]) - \delta f^* c_1([A]) + dd^c\log h^2 \geq \big[\text{min}\{\nu^f(D), 1\}\big],
\end{equation}
where $h$ is a bounded non-negative continuous function on $B(1)$. 
Let $\big\|f\big\|_A$ be defined in (\ref{a1}). Then by (\ref{Chern}), $ f^*c_1([A]) = dd^c \log  \big\|f\big\|_A^2$.  Hence (\ref{brotnonintassumption})  can be rewritten as, by noticing that $\big\|f\big\|_D=\big\|f\big\|_A^d$, 
		\begin{equation*}
		dd^c \log  \big\|f\big\|_A^{2d} - \delta  dd^c \log \big\|f\big\|_A^2 + dd^c \log h^2\geq dd^c \log \big|\varphi\big|^2 
	\end{equation*}
where $\varphi$ is a holomorphic function on $B(1)$ with $\nu_\varphi = \text{min}\{\nu^f(D), 1\}.$
	Thus, there is a subharmonic function $u \not \equiv 0$ on $B(1)$ such that 
		\begin{equation*}
		e^u \leq \big\|f\big\|_A^{d - \delta} 
	\end{equation*}
	and $u - \log \big|\varphi\big|$ is also subharmonic.
	Let 
		\begin{equation*}
		v := \log  { \big\| \mathcal P(j_k(f))\big\| _ {\tilde h ^{-1}} \over \big\|f\big\|_A^{\tilde m}   } + mu
	\end{equation*}
where $\|\cdot\|_{\tilde h}$ is an Hermitian metric on $A^{\tilde m}.$
	We now show that $v$ is subharmonic. From the proof of Theorem \ref{smtXie}, we have 
		\begin{equation*}
		dd^c \log \big\| \mathcal P(j_k(f))\big\| _ {\tilde h ^{-1}}^2  \geq \tilde m f^*c_1([A]) - m \cdot \min \{\nu ^f(D), 1\}.
	\end{equation*}
	Hence
	\begin{eqnarray*}
	2 dd^c v&=& dd^c \log \big\| \mathcal P(j_k(f))\big\| _ {\tilde h ^{-1}}^2 - dd^c \log  \big\|f\big\|_A^{2\tilde m} + mdd^c (2u  -\log \big|\varphi\big|^2) \\
&~&+~mdd^c \log \big|\varphi\big|^2\\
	&\geq& 	\tilde m f^*c_1([A]) -  m \cdot \min \{\nu ^f(D), 1\} - \tilde m f^*c_1([A])  + mdd^c (2u  -\log \big|\varphi\big|^2)\\\
	&~& +~ m \cdot \min \{\nu ^f(D), 1\}= mdd^c (2u  -\log \big|\varphi\big|^2)\geq 0.
\end{eqnarray*}
	So $v$ is subharmonic. 
	By the growth condition of $f$, there exists a subharmonic function $w \not \equiv 0$ on $B(1)$ such that 
	\begin{equation*}
		e^w \lambda \leq \big\|f\big\|_A^{\rho} 
	\end{equation*}
	where $\omega= {\sqrt{-1}\over2\pi} \lambda dz\wedge d\bar{z}.$
	Let
	\begin{equation*}
		\tilde u=w+tv
	\end{equation*}
	where $t$ is a positive number to be determined later.  Then
	\begin{equation*}
		e^{\tilde u} \lambda^2 \leq  { \big\|\mathcal P(j_k(f))\big\| _ {\tilde h ^{-1}} ^t \over \big\|f\big\|_A^{t\tilde m}  }\big\|f\big\|_A ^{tm(d-\delta )+2 \rho }.
	\end{equation*}
	By taking 
	\begin{equation*}
	tm(d-\delta )+2 \rho =t\tilde m,
	\end{equation*}
		we get
	\begin{equation}\label{etildeu}
		e^{\tilde u}\lambda^2 \leq  \big\|\mathcal P(j_k(f))\big\| _ {\tilde h ^{-1}} ^t.
	\end{equation}
	Next we  check that $t m< 1$. By our assumption $\delta > d - {\tilde m \over m} +2\rho$,
	\begin{equation*}
		tm = {2\rho m\over  \tilde m -md +m\delta} < {2\rho m\over  \tilde m -md +m(d - {\tilde m \over m} +2\rho)}  =1.
	\end{equation*} 
Thus Lemma \ref{jetdefest} (ii) implies that 
$$\int_0^{2\pi } \big\|\mathcal P(j_k(f))(re^{i \theta })\big\|_{\tilde h^{-1}}^t{d\theta \over 2\pi}\leq K \Big({R\over r(R-r)} T_{f, A}(R)\Big)^p. $$ 
The rest of the argument is similar to the proof of Theorem \ref{Brotbekdefect}.  We eventually will get 
	$$
		\int_{B(1)} e^{\tilde u} dV  < \infty,$$
which contradicts  the result of Yau \cite{Yau76} (see also \cite{Karp82}) that
	$$
		\int_{B(1)}  e^{\tilde u}  dV =  \infty.$$
	This completes the proof.
\end{proof}

\end{document}